\theoremstyle{plain}
\newtheorem{thm}{Theorem}[section]
\newtheorem{prop}[thm]{Proposition}
\newtheorem{conj}[thm]{Conjecture}
\newtheorem{lemma}[thm]{Lemma}
\newtheorem{question}{Question}
\theoremstyle{definition}
\newtheorem{example}[thm]{Example}
\newtheorem{defn}[thm]{Definition}
\theoremstyle{remark}
\newtheorem{remark}[thm]{Remark}
\renewcommand{\SS}{{\mathbb S}}
\newcommand{\CC}{{\mathbb C}}
\newcommand{\RR}{{\mathbb R}}
\newcommand{\NN}{{\mathbb N}}
\newcommand{\ZZ}{{\mathbb Z}}
\newcommand{\nN}{{\mathcal N}}
\newcommand{\aA}{{\mathcal A}}
\newcommand{\cC}{{\mathcal C}}
\newcommand{\mM}{{\mathcal M}}
\newcommand{\op}{\operatorname}
\newcommand{\Spinc}{\op{Spin}^c}
\newcommand{\End}{\op{End}}
\renewcommand{\ker}{\op{Ker}}
\newcommand{\cl}{\op{cl}}
\newcommand{\PD}{\op{PD}}
\renewcommand{\1}{\mathds{1}}
\newcommand{\e}{\varepsilon}
\newcommand{\bA}{\mathbf A}
\newcommand{\fs}{\mathfrak s}
\newcommand{\fd}{\mathfrak d}
\newcommand{\fM}{\mathfrak M}
\newcommand{\fc}{\mathfrak c}
\newcommand{\Hfrom}{\widehat{\mathit{HM}}}
\newcommand{\Cfrom}{\widehat{\mathit{CM}}}
    \def\Hto{%
       \setbox0=\hbox{$\widehat{\mathit{HM}}$}
       \setbox1=\hbox{$\mathit{HM}$}
       \dimen0=1.1\ht0
       \advance\dimen0 by 1.17\ht1
       \smash{\mskip2mu\raise\dimen0\rlap{%
          \begin{turn}{180}
              {$\widehat{\phantom{\mathit{HM}}}$}
           \end{turn}} \mskip-2mu    
                \mathit{HM}
    }{\vphantom{\widehat{\mathit{HM}}}}{}}
    \def\Cto{%
       \setbox0=\hbox{$\widehat{\mathit{CM}}$}
       \setbox1=\hbox{$\mathit{CM}$}
       \dimen0=1.1\ht0
       \advance\dimen0 by 1.17\ht1
       \smash{\mskip2mu\raise\dimen0\rlap{%
          \begin{turn}{180}
              {$\widehat{\phantom{\mathit{CM}}}$}
           \end{turn}} \mskip-2mu    
                \mathit{CM}
    }{\vphantom{\widehat{\mathit{CM}}}}{}}
\numberwithin{equation}{section}
\definecolor{blue}{rgb}{0,0,1}
\definecolor{red}{rgb}{1,0,0}
\definecolor{green}{rgb}{0,.7,0}
\title[No homotopy 4-sphere invariants using ECH=SWF]{No homotopy 4-sphere invariants using $ECH = SWF$}
\author{Chris Gerig}
\date{\today}
\address{Department of Mathematics\\
Harvard University\\
MA 02138\\
USA}
\email{cgerig@math.harvard.edu}
\begin{document}

\begin{abstract}
In relation to the 4-dimensional smooth Poincar\'e conjecture we construct a tentative invariant of homotopy 4-spheres using embedded contact homology (ECH) and Seiberg--Witten theory (SWF). But for good reason it is a constant value independent of the sphere, so this null-result demonstrates that one should not try to use the usual theories of ECH and SWF. On the other hand, a corollary is that there always exist pseudoholomorphic curves satisfying certain constraints in (punctured) 4-spheres.
\end{abstract}

\maketitle

\section{(Near-)symplectic geometry}

Let $X$ be a homotopy 4-sphere.\footnote{That is, $\pi_\ast(X)\cong\pi_\ast(S^4)$, hence $H_\ast(X;\ZZ)\cong H_\ast(S^4;\ZZ)$ by the Hurewicz theorem.} In particular, it is a closed simply-connected orientable smooth 4-manifold with $b^2_+(X)=0$ so that the Seiberg--Witten invariants do not apply. As a topological manifold it is homeomorphic to $S^4$ by the solved Poincar\'e conjecture, thanks to Freedman \cite{Freedman:Poincare}. Whether $X$ has an exotic smooth structure is the result of

\begin{conj}[4-dimensional smooth Poincar\'e conjecture]
\label{conj}
$X$ is diffeomorphic to $S^4$.
\end{conj}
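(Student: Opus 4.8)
The plan --- and it is only a plan, since Conjecture~\ref{conj} is one of the central open problems in low-dimensional topology --- would be to manufacture a diffeomorphism invariant of $X$ fine enough to obstruct exoticness and then show that it always takes the same value it does on the standard $S^4$. Because $b^2_+(X)=0$ the usual Seiberg--Witten and Donaldson invariants are identically trivial, so the only foothold is the near-symplectic geometry developed in this section: $X$ carries a closed $2$-form that is symplectic off a disjoint collection of circles, and one can try to count pseudoholomorphic currents asymptotic to tubular neighborhoods of those circles, assembling them into an embedded-contact-homology-type chain complex. Through the $ECH = SWF$ correspondence this count is in turn governed by the Seiberg--Witten--Floer homology of the $3$-manifolds obtained by surgery along the zero-circles, which is the structure this paper is set up to analyze.

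First I would fix a generic near-symplectic form $\omega$ and a generic compatible almost complex structure, describe the zero-circles and the contact-type geometry on the boundaries of their neighborhoods, and set up the relevant moduli spaces of ECH-type currents with prescribed ends. Then I would establish transversality, Gromov-style compactness, and gluing --- enough to define a chain complex and its homology --- compute this homology for $S^4$ to pin down a baseline, and prove independence of all auxiliary choices so that the output depends only on the smooth structure of $X$. The decisive final step would be to upgrade "$X$ has the same invariant as $S^4$" to "$X$ is diffeomorphic to $S^4$", presumably by reconstructing a symplectic cobordism, or a handle or trisection decomposition of $X$, out of the structure of the curve counts.

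The main obstacle is exactly that last step, and for this particular route it is fatal: no Floer-theoretic count on a $4$-manifold with $b^2_+=0$ is known to remember more than the homotopy type, and the principal theorem of this paper makes this precise --- the natural ECH-style invariant built as above turns out to be a \emph{constant}, independent of the homotopy $4$-sphere (its only tangible payoff being the existence of constrained pseudoholomorphic curves in punctured $4$-spheres). So this strategy does not prove the conjecture; it yields a null result. Genuinely settling Conjecture~\ref{conj} would require either a new invariant that does not factor through Seiberg--Witten--Floer homology of surgered $3$-manifolds, or an entirely non-gauge-theoretic idea --- handle calculus on specific candidate spheres, trisection techniques, and the like. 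As matters stand I do not expect any presently available method, including the one organized here, to resolve it, and Conjecture~\ref{conj} remains open.
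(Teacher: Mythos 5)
You have correctly recognized that this statement is a conjecture, not a theorem: the paper offers no proof of it, and indeed its main results show that the very strategy you outline (a near-symplectic ECH-type curve count on $X^\ast-\omega^{-1}(0)$, identified via $ECH=SWF$ with a relative Seiberg--Witten invariant) is forced to be constant and hence cannot detect exotic smooth structures. Your sketch is essentially the paper's own program together with its null conclusion, so there is nothing to fault beyond the minor slip that the relevant $3$-manifolds are the boundaries $S^1\times S^2$ of tubular neighborhoods of the zero-circles (with an overtwisted contact form), not manifolds obtained by surgery along them; the conjecture itself remains open.
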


Let $X^\ast$ be the noncompact manifold obtained by puncturing $X$, or equivalently by removing a small standard (closed) 4-ball from $X$ and smoothly attaching an end of the form $[0,\infty)\times S^3$. We say that $X^\ast$ is \textit{asymptotically Euclidean}, i.e. the complement of some compact set is diffeomorphic to the complement in the standard $\RR^4$ of some compact set. To discuss the relation between smooth structures on $X$ and $X^\ast$ we recall Cerf's ``$\Gamma_4=0$'' theorem and Cerf--Palais' ``disk'' theorem, which together imply that there is a unique way to remove or replace a standard 4-ball.

\begin{thm}[Cerf \cite{Cerf:theorem, Eliashberg:20, GeigesZehmisch:Cerf}]
All orientation-preserving diffeomorphisms of the sphere $S^3$ extend to a diffeomorphism of the standard ball $D^4$.
\end{thm}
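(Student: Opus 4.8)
The plan is to reduce the statement to Cerf's fact that $\Gamma_4:=\pi_0\,\Diff^+(S^3)$ vanishes, and then to prove \emph{that} by contact geometry, following Eliashberg. The reduction is soft: if $\phi\in\Diff^+(S^3)$ is smoothly isotopic to $\mathrm{id}_{S^3}$, choose an isotopy $h_t$ ($t\in[0,1]$, $h_0=\mathrm{id}$, $h_1=\phi$) that is constant for $t$ near $0$, fix a smooth $\rho\colon(0,1]\to[0,1]$ that vanishes near $0$ with $\rho(1)=1$, and on $D^4=\{x\in\RR^4:\lvert x\rvert\le 1\}$ set $\Phi(0)=0$ and $\Phi(r\theta)=r\,h_{\rho(r)}(\theta)$ for $r\in(0,1]$, $\theta\in S^3$; this $\Phi$ is a diffeomorphism of $D^4$ with $\Phi|_{S^3}=\phi$. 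So it suffices to show every orientation-preserving diffeomorphism of $S^3$ is isotopic to the identity.

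For that I would work with the standard positive tight contact structure $\xi=\ker\alpha$ on $S^3=\partial D^4\subset\CC^2$. Given $\phi\in\Diff^+(S^3)$, the pushforward $\phi_*\xi$ is again a positive contact structure (here orientation-preservation of $\phi$ is used), and it is tight because overtwistedness is a diffeomorphism-invariant property. By Eliashberg's classification — there is a unique tight contact structure on $S^3$ up to isotopy — $\phi_*\xi$ is isotopic to $\xi$, and Gray stability promotes this to an ambient isotopy $\psi_t$ with $\psi_0=\mathrm{id}$ and $(\psi_1)_*(\phi_*\xi)=\xi$; hence $c:=\psi_1\circ\phi$ is a contactomorphism of $(S^3,\xi)$ and $\phi=\psi_1^{-1}\circ c$ with $\psi_1$ isotopic to $\mathrm{id}$. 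Now I invoke Eliashberg's theorem that the contactomorphism group of the standard contact $S^3$ is connected — indeed it deformation retracts onto $\U(2)\subset\SO(4)$ — so $c$, and therefore $\phi$, is smoothly isotopic to $\mathrm{id}_{S^3}$ (in fact to a linear map in $\SO(4)$, which of course extends to $D^4$ directly).

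The two inputs attributed to Eliashberg — uniqueness of tight contact structures on $S^3$ and connectedness of its contactomorphism group — are where essentially all the work lies, and this is the main obstacle: both are proved by a detailed study of moduli spaces of pseudoholomorphic discs with boundary on (a one-parameter family of) characteristic foliations, i.e. ``filling by holomorphic discs'', and the required transversality, compactness and index bookkeeping for those discs is the substantive part. Everything else above is point-set topology. (One could instead cite Cerf's original argument, which stays inside differential topology by analyzing generic one-parameter families of Morse functions via Cerf theory together with the pseudoisotopy theorem, or the streamlined treatment of Geiges--Zehmisch; and of course the full Smale conjecture $\Diff(S^3)\simeq O(4)$ of Hatcher implies the statement but is far stronger than needed.)
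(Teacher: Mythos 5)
The paper does not actually prove this statement; it is quoted as background with citations (Cerf's original Cerf-theoretic argument, and Eliashberg's holomorphic-disc proof as detailed by Geiges--Zehmisch), so there is no in-paper argument to match yours against. Your reduction is fine: coning off an isotopy $h_t$ from $\mathrm{id}$ to $\phi$ does extend $\phi$ over $D^4$, and the middle step is also sound --- $\phi_*\xi_{\op{std}}$ is a positive tight contact structure, so Eliashberg's uniqueness theorem plus Gray stability writes $\phi=\psi_1^{-1}\circ c$ with $\psi_1$ isotopic to the identity and $c$ a contactomorphism of $(S^3,\xi_{\op{std}})$.

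The gap is your final input. The connectedness of the contactomorphism group of the standard tight $S^3$ --- a fortiori its deformation retraction onto $\U(2)$ --- is not an independent ingredient: in the literature it is \emph{deduced} from the homotopy type of $\Diff(S^3)$ (Cerf's theorem for $\pi_0$, Hatcher's Smale conjecture for the full statement) together with Eliashberg's contractibility of the space of tight contact structures, via the fibration $\op{Cont}(S^3,\xi)\to\Diff(S^3)\to\lbrace\text{contact structures}\rbrace$. Filling by holomorphic discs is how one proves uniqueness/contractibility of tight structures; it is not, by itself, how one proves connectedness of the contactomorphism group. So as written the argument is circular: you have reduced Cerf's theorem to a statement whose known proofs presuppose it (or presuppose the far stronger Smale conjecture). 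Eliashberg's actual argument bypasses the contactomorphism group entirely: given $\phi$, one sweeps out $S^3=\partial D^4\subset\CC^2$ by a family of $2$-spheres with standard characteristic foliation, fills each by a family of holomorphic discs inside $D^4$, and the resulting family of embedded $3$-balls directly furnishes an extension of $\phi$ over $D^4$. To repair your write-up you would need either to give an independent disc-filling proof that every contactomorphism of $(S^3,\xi_{\op{std}})$ is smoothly isotopic to the identity (at which point the preceding reduction to contactomorphisms is superfluous, since the same construction handles $\phi$ itself), or to revert to that direct extension argument.
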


\begin{thm}[Cerf--Palais \cite{Cerf:topologie, Palais:natural}]
\label{CerfPalais}
Any two equi-oriented embeddings of a closed 4-ball into a connected 4-manifold are ambient isotopic.
\end{thm}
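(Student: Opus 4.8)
The plan is to deduce the statement from the isotopy extension theorem together with a connectivity analysis of the space $\op{Emb}(D^4,M)$ of smooth embeddings of the closed $4$-ball into a connected $4$-manifold $M$. Recall that the isotopy extension theorem (Thom, Palais, Cerf) says a smooth isotopy of a compact manifold-with-boundary in $M$ --- here $D^4$, taken as the restriction of an embedding of an open ball, so that its boundary $S^3$ sits two-sidedly in the interior of $M$ --- extends to a compactly supported ambient isotopy of $M$. So it suffices to join the two given embeddings $f_0,f_1$ by a smooth path $t\mapsto f_t$ inside $\op{Emb}(D^4,M)$; the theorem then reduces to the claim that equi-oriented embeddings lie in a single path component of $\op{Emb}(D^4,M)$.

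I would construct the path in three moves. (1) Since $M$ is connected, choose a smooth arc from $f_0(0)$ to $f_1(0)$; this is an isotopy of a point, which extends to an ambient isotopy of $M$, and applying it to $f_0$ we may assume $f_0(0)=f_1(0)=:p$. (2) Fix a chart $\psi\colon U\xrightarrow{\ \sim\ }\RR^4$ with $\psi(p)=0$; after first shrinking both balls into $U$ (the same device, now run in the source: an ambient isotopy induced by a compactly supported diffeomorphism of the domain ball pushing $D^4$ into a tiny concentric ball), we may view $g_i:=\psi\circ f_i$ as embeddings $D^4\to\RR^4$ with $g_i(0)=0$, whose derivatives $A_i:=(dg_i)_0\in GL(4,\RR)$ have determinants of the same sign by equi-orientation, hence lie in the same component of $GL(4,\RR)$. (3) Run the Palais shrinking trick: for $t\in(0,1]$ put $\phi^i_t(x):=t^{-1}g_i(tx)$, an embedding $D^4\to\RR^4$, with $\phi^i_t\to A_i|_{D^4}$ in $C^\infty$ as $t\to0^+$ because $g_i(y)=A_iy+O(|y|^2)$. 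This joins $g_0$ to the linear embedding $A_0|_{D^4}$ and $g_1$ to $A_1|_{D^4}$, while a path $C(s)$ from $A_0$ to $A_1$ in $GL(4,\RR)$ joins $A_0|_{D^4}$ to $A_1|_{D^4}$; concatenating, transporting back by $\psi^{-1}$, and damping every auxiliary isotopy to be supported in $U$, one gets the desired path from $f_0$ to $f_1$.

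It helps to package this structurally: the $1$-jet map $j\colon\op{Emb}(D^4,M)\to\op{Fr}(M)$, $f\mapsto\big(f(0),(df)_0\big)$, into the linear frame bundle is a locally trivial fibration (moves (1)--(2) furnish local trivializations) with contractible fibers (move (3) retracts a fiber to a linear model), so $\pi_0\op{Emb}(D^4,M)\cong\pi_0\op{Fr}(M)$. For $M$ connected and orientable --- in particular for a homotopy $4$-sphere or its asymptotically Euclidean puncture --- $\op{Fr}(M)$ has exactly two components, interchanged by reversing orientation, and ``equi-oriented'' means precisely that the frames of $f_0$ and $f_1$ at their centers lie in the same one; thus $f_0$ and $f_1$ are joined by a path in $\op{Emb}(D^4,M)$, hence ambient isotopic. (If $M$ is connected but non-orientable, $\op{Fr}(M)$ is connected and the hypothesis is automatic.)

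The geometric idea is simple; the content is analytic. The real input is the isotopy extension theorem and its parametrized/relative forms, with enough control on supports that the auxiliary isotopies can be confined to the chart $U$ and concatenated --- this is the heart of Cerf \cite{Cerf:topologie} and Palais \cite{Palais:natural}, which I would cite rather than reprove. One must also pin down ``embedding of $D^4$'' so that isotopy extension applies (restriction of an embedding of an open ball), and verify the shrinking trick with care: that $\{\phi^i_t\}_{t\in(0,1]}$ together with its limit form a $C^\infty$ family on $[0,1]$ staying inside the chart. The only point particular to this paper is immediate --- a homotopy $4$-sphere, and its puncture, are connected and orientable.
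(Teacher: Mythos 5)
Your argument is correct, and it is essentially the classical Cerf--Palais proof that the paper simply cites without reproducing: isotopy extension reduces the theorem to path-connectedness of the equi-oriented part of $\op{Emb}(D^4,M)$, and the linearization/shrinking trick $\phi_t(x)=t^{-1}g(tx)$ identifies $\pi_0$ of that space with $\pi_0$ of the frame bundle. The supporting details you defer (parametrized isotopy extension with support control, smoothness of the shrinking family at $t=0$) are exactly the content of the cited references, so there is nothing further to add.
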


Cerf's theorem implies that all orientation-preserving diffeomorphisms of $S^3$ are smoothly isotopic to the identity, and in particular, there are no ``twisted'' exotic 4-spheres (i.e. those which are obtained from two standard 4-balls by gluing their boundary via a diffeomorphism of $S^3$). Cerf--Palais' theorem implies that $X^\ast$ is diffeomorphic to the standard $\RR^4$ if $X$ is diffeomorphic to the standard $S^4$, by isotoping the embedding of a standard 4-ball to the northern hemisphere of $S^4$. But the converse is unknown, meaning it might be possible that $X$ is exotic and $X^\ast$ is standard, in which case if we view $X^\ast$ as an open 4-ball then its closure is an exotic closed 4-ball (with standard interior).\footnote{Private communication with Gompf. The statement ``$X$ exotic $\Leftrightarrow$ $X^\ast$ exotic'' holds true if the 3-dimensional smooth Schoenflies conjecture is answered in the affirmative, see Gompf \cite{Gompf:groupExotic}*{Proposition 2.2}.} Note that if we glue a standard 4-ball to an exotic closed 4-ball along the common $S^3$ boundary then the resulting 4-sphere is exotic, otherwise we could remove a standard open 4-ball from $S^4$ to get a contradiction using Theorem~\ref{CerfPalais}.

\bigskip
The reason we pass from $X$ to $X^\ast$ is that $X$ does not admit symplectic 2-forms nor near-symplectic 2-forms but $X^\ast$ admits both, thanks to Gromov's h-principle theorem on open manifolds (Eliashberg \& Mishachev \cite{EliashbergMishachev}*{\S10.2.2}). Moreover, we will see momentarily that there are suitably nice near-symplectic forms to equip $X^\ast$ with, and that there is a crucial characterization of the symplectic forms. This characterization is known as Gromov's ``recognition of $\RR^4$'', stated below. It implies that there are no exotic symplectic structures on $\RR^4$ that are \textit{asymptotically standard} (up to compactly supported symplectomorphisms), i.e. such that they agree with the standard symplectic form on $\RR^4$ outside a compact set.  For the record, by ``standard symplectic $\RR^4$'' we mean the total space of $T^\ast\RR^2$ with canonical coordinates $(x_1,x_2,y_1,y_2)$ and symplectic form $\omega_{\op{std}}=dx_1\wedge dy_1+dx_2\wedge dy_2$.

\begin{thm}[Gromov \cite{Gromov}*{\S0.3.C}]
Let $(M,\omega)$ be a noncompact symplectic 4-manifold whose reduced integral homology is trivial. Suppose there are compact sets $K_M\subset M$ and $K_{\RR^4}\subset\RR^4$ with a symplectomorphism $\phi:(M-K_M,\omega)\to(\RR^4-K_{\RR^4},\omega_{\op{std}})$.  Then $\phi$ extends to a symplectomorphism $(M,\omega)\to (\RR^4,\omega_{\op{std}})$ after removing slightly bigger compact sets.
\end{thm}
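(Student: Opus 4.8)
The plan is to fill the ``hole'' of $M$ with a neighborhood of a projective line, producing a closed symplectic $4$-manifold with the homology of $\CC\PP^2$ that carries a symplectically embedded $+1$-sphere, then to recognize this manifold as $\CC\PP^2$ with a Fubini--Study form by foliating it with pseudoholomorphic spheres, and finally to peel off the standard $4$-ball corresponding to $M$.

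First I would compactify. Identify $\RR^4=\CC^2\subset\CC\PP^2$ and rescale $\omega_{\op{std}}$ so that, for a large round ball $B\supset K_{\RR^4}$, there is a symplectomorphism $(\CC\PP^2,c\,\omega_{\mathrm{FS}})\cong(B,\omega_{\op{std}})\cup\nu(\ell_\infty)$ splitting $\CC\PP^2$ into the ball and a neighborhood of the line at infinity $\ell_\infty$ --- the standard ball-packing picture for $\CC\PP^2$. Using $\phi$, excise the end $\phi^{-1}(\RR^4\setminus B)$ from $M$ and glue in $(\nu(\ell_\infty),c\,\omega_{\mathrm{FS}})$ along the resulting copy of $S^3$; since $H^2$ of the separating shell vanishes, a Moser argument supported in that shell makes the two forms agree there. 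The result is a closed symplectic $4$-manifold $(\widehat M,\widehat\omega)$ containing a symplectically embedded sphere $E=\ell_\infty$ with $E\cdot E=+1$. Because $M$ has trivial reduced integral homology, Mayer--Vietoris gives $H_2(\widehat M;\ZZ)=\ZZ\langle[E]\rangle$ and $H_1=H_3=0$, and the adjunction formula for the embedded sphere $E$ forces $c_1(\widehat M)\cdot[E]=3$.

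Then I would study the $J$-holomorphic spheres in the class $A:=[E]$, for a generic $\widehat\omega$-compatible $J$ making $E$ holomorphic and integrable near $E$. Since every class in $H_2(\widehat M)$ is an integer multiple of $A$ and $\widehat\omega(A)>0$, no class is a sum of two classes of positive area, so Gromov compactness rules out bubbling and multiple covers: every stable $A$-curve is a single somewhere-injective sphere, embedded by adjunction ($A^2=1$, $c_1(A)=3$). Automatic transversality in dimension four applies ($c_1(A)=3>0$), so the moduli space is a smooth compact manifold, of dimension $2$ after imposing passage through a fixed $p_\infty\in E$; positivity of intersections ($A^2=1$) then shows any two such curves meet only at $p_\infty$, transversally there, so through each $q\neq p_\infty$ there passes a unique such curve, and, punctured at $p_\infty$, these curves foliate $\widehat M\setminus\{p_\infty\}$ by embedded planes with leaf space $\cong S^2$ (via $C\mapsto T_{p_\infty}C\in\PP(T_{p_\infty}\widehat M)$). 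This exhibits $\widehat M\setminus\{p_\infty\}$ as a complex line bundle over $S^2$, necessarily $\oO_{\CC\PP^1}(1)$ because $E\cdot E=+1$; hence $\widehat M\cong\CC\PP^2$ with $E$ a line. To upgrade from a diffeomorphism to a symplectomorphism I would feed the pencil into a Moser argument: the holomorphic disks carry $\widehat\omega$ as an area form and $\widehat\omega$ is otherwise pinned down by its cohomology class, so $(\widehat M,\widehat\omega)\cong(\CC\PP^2,c\,\omega_{\mathrm{FS}})$ and hence $M_0:=\widehat M\setminus\nu(E)\cong(B,\omega_{\op{std}})$. Reassembling $M=M_0\cup\phi^{-1}(\RR^4\setminus B)$ --- legitimate because near $\partial M_0$ the construction is just $\phi$, up to Moser corrections supported in the shell --- gives a symplectomorphism $(M,\omega)\to(\RR^4,\omega_{\op{std}})$ agreeing with $\phi$ outside slightly enlarged compact sets. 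In modern terms this recognition step is a special case of McDuff's classification of closed symplectic $4$-manifolds carrying a symplectically embedded sphere of nonnegative self-intersection.

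The hard part will be the pseudoholomorphic analysis of the third step: showing the $A$-spheres degenerate in no way and organize into an honest pencil with the incidence combinatorics of lines in $\CC\PP^2$ --- this is exactly where the homological hypothesis on $M$ is indispensable, through the absence of competing homology classes --- and then promoting the resulting diffeomorphism to a symplectomorphism, the symplectic rigidity at the end being the subtlest point since it rests on a symplectic form tamed by this family of holomorphic curves being determined by its cohomology class.
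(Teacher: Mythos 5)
The paper does not prove this theorem---it is quoted directly from Gromov's paper---and your plan (cap off the end with a neighborhood of $\ell_\infty\subset\CC\PP^2$, use the triviality of $\widetilde H_\ast(M)$ to force $H_2(\widehat M)=\ZZ[E]$ and hence rule out bubbling and multiple covers, foliate by the pencil of $J$-holomorphic $+1$-spheres through $p_\infty$, and recognize $(\CC\PP^2,c\,\omega_{\mathrm{FS}})$) is precisely Gromov's original argument as refined by McDuff, so it is the same approach as the cited proof. The one step you rightly flag as subtle, arranging the final symplectomorphism to agree with $\phi$ outside a slightly larger compact set, is handled in the literature by the connectedness of the space of symplectic ball embeddings into $\CC\PP^2$ (McDuff), and the ``slightly bigger compact sets'' clause in the statement gives exactly the room this requires.
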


\begin{remark}
Gromov also proved that the group of compactly supported symplectomorphisms of the standard symplectic $(\RR^4,\omega_{\op{std}})$ is contractible, so the space of asymptotically standard symplectic forms on $\RR^4$ is homotopy equivalent to the group of all compactly supported diffeomorphisms. Eliashberg asked about the topology of this group in \cite{Eliashberg:90}*{\S7}. Related results are given by Eliashberg and McDuff in \cite{Eliashberg:properties, McDuff:boundaries}.
\end{remark}

We now remind the reader of the definition of a near-symplectic form. Using $M$ to temporarily denote either $X^\ast$ or any closed 4-manifold, a closed 2-form $\omega:M\to\bigwedge^2T^\ast M$ is \textit{near-symplectic} if for all points $x\in M$ either $\omega^2(x)>0$, or $\omega(x)=0$ and the rank of the gradient $\nabla\omega_x:T_xM\to\bigwedge^2T_x^\ast M$ is three. In other words, $\omega$ is symplectic on $M-\omega^{-1}(0)$ and vanishes transversely on its zero set $\omega^{-1}(0)$ which consists of a finite disjoint union of smooth embedded circles. The following result was asserted by Taubes in \cite{Taubes:Luttinger}, and we sketch the proof in the Appendix~\ref{appendix}.

\begin{thm}
\label{LuttingerTaubes}
There exist (exact) near-symplectic forms on $X^\ast$ which are asymptotically standard.
\end{thm}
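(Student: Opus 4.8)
The plan is to build the form on a compact core of $X^\ast$ and then extract exactness for free. Write $X^\ast=W\cup_{S^3}\big([0,\infty)\times S^3\big)$, where $W:=X\setminus\op{int}(B)$ for a small closed coordinate ball $B\subset X$; since $X$ is a homotopy $4$-sphere, $W$ is a compact contractible $4$-manifold with $\partial W\cong S^3$ (a ``homotopy $4$-ball''), and hence $X^\ast\simeq W$ is contractible. Fix the identification of the end $[0,\infty)\times S^3$ with $\{|x|\ge2\}\subset(\RR^4,\omega_{\op{std}})$ and of a collar of $\partial W$ with the symplectic shell $\{1\le|x|\le2\}$, each carrying $\omega_{\op{std}}$. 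It then suffices to produce a near-symplectic form $\omega$ on $W$ agreeing with $\omega_{\op{std}}$ on this collar and with zero circles contained in $\op{int}(W)$: gluing $\omega$ to $\omega_{\op{std}}$ over the Euclidean end yields an asymptotically standard near-symplectic form on $X^\ast$. Exactness is then automatic, since $X^\ast$ is contractible: being closed, $\omega$ has a global primitive $\mu$; near infinity $\omega=\omega_{\op{std}}=d\lambda_{\op{std}}$ for the standard Liouville form $\lambda_{\op{std}}$, so $\mu-\lambda_{\op{std}}$ is closed on the simply connected region $\{|x|\ge R\}$, hence exact there, and after subtracting $d(\chi g)$ for a cutoff $\chi$ supported near infinity the primitive can be taken to equal $\lambda_{\op{std}}$ outside a compact set.

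To construct $\omega$ on $W$ I would either run a Weinstein-type handle induction or invoke a broken fibration, inserting a circle of zeros whenever a symplectic extension is obstructed. For the handle approach one attaches, to the standard symplectic shell (whose inner boundary is the tight contact $S^3$), the handles of a decomposition of $W$: subcritical $1$-handles extend the symplectic form with no new zeros; a $2$-handle is attached by Legendrian surgery whenever its attaching circle can be isotoped to be Legendrian with the required framing, and otherwise by a near-symplectic $2$-handle in the sense of Gay and Kirby, where the $2$-form is arranged to vanish transversally along one embedded circle carrying Honda's standard local normal form; $3$-handles and the closing $4$-handle are treated dually, at the possible cost of further zero circles. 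For the (more robust) fibration approach one equips $W$ with a broken, achiral Lefschetz fibration $\pi\colon W\to D^2$ --- these exist on any compact $4$-manifold with prescribed boundary behavior, by the theory of indefinite Morse $2$-functions --- chosen to agree near $\partial W$ with the standard Lefschetz--Milnor model $(z_1,z_2)\mapsto z_1^2+z_2^2$ on a large ball in $\CC^2=\RR^4$; then the Thurston--Gompf construction $\omega=\eta+K\,\pi^\ast\alpha$, with $\eta$ a fiberwise symplectic form, $\alpha$ an area form on the base, and $K\gg0$, produces a form that is nondegenerate off the indefinite fold locus, agrees with Honda's near-symplectic normal form along it, and --- for $\eta$ chosen appropriately near $\partial W$ --- equals $\omega_{\op{std}}$ there, the fold circles being the zero set.

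The essential obstacle is not \emph{existence} of near-symplectic (or even symplectic) forms on $X^\ast$, which already follows from Gromov's $h$-principle on the open manifold as recalled above, but rather making the form \emph{asymptotically standard} while keeping it near-symplectic. A relative $h$-principle for \emph{symplectic} forms rel the Euclidean end cannot hold: combined with Gromov's recognition of $\RR^4$ it would force $X^\ast$ to be symplectomorphic, hence diffeomorphic, to the standard $\RR^4$, contradicting the openness of Conjecture~\ref{conj}. Obstruction-theoretically, the homotopy invariant obstructing a nondegenerate $2$-form that is standard at infinity lives in $H^4(X^\ast,\{|x|\ge R\};\pi_3 S^2)\cong\ZZ$, and inserting a zero circle with Honda's model changes it by $\pm1$; the circles are exactly what kills it. Accordingly, the two points I expect to require the most care are: (i) realizing the \emph{prescribed} standard collar of $\partial W$ --- not merely some contact-type collar --- while running the construction, together with the closing $4$-handle (or, in the fibration picture, the prescribed boundary model); and (ii) verifying that the $2$-form one writes down vanishes \emph{transversally} along each circle, i.e.\ that $\nabla\omega$ has rank $3$ there, which is precisely what Honda's local models secure.
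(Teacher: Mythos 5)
Your outline is a genuinely different route from the paper's, which is Hodge-theoretic rather than handle- or fibration-theoretic. The paper equips $X^\ast$ with a cylindrical-end metric, extends the self-dual form $d(e^s\lambda)$ on $[0,\infty)\times S^3$ to an arbitrary exact $2$-form $\eta$ on $X^\ast$, and solves the Laplace equation $(d^\ast d+dd^\ast)a=-d^\ast\eta$ by $L^2$ Hodge theory on manifolds with cylindrical ends \cite{Lockhart:FredholmHodge}; the self-dual part $\omega=(\eta+da)_+$ is then closed, hence harmonic, and decay estimates for $a$ make $\omega$ nondegenerate near infinity. Transverse (rank $3$) vanishing is then obtained not from local models but from Honda's genericity theorem for self-dual harmonic forms \cite{Honda:transversality}, applied rel the end, and asymptotic standardness is restored at the last step by subtracting an exact form $d(\rho(s)\alpha)$ supported near infinity. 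This buys precisely the two things you flag as delicate: the prescribed behavior at infinity is an input to the PDE rather than something to be engineered through a closing $4$-handle, and transversality of the vanishing is generic rather than checked model by model. Your route (Gay--Kirby handles, or a broken fibration plus Thurston--Gompf) is more constructive and makes the zero circles topologically visible, which is appealing, but it concentrates the difficulty exactly where you admit it does. Your reduction of exactness and of the primitive's standardness at infinity to contractibility of $X^\ast$ is correct and matches what the paper needs.

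Two points in your sketch are genuine gaps as written, not just points ``requiring care.'' First, the Thurston--Gompf construction is incompatible with \emph{achiral} Lefschetz points: near a negatively oriented critical point the fiberwise form and $K\,\pi^\ast\alpha$ cannot both be made positive, and the degeneracy there is at an isolated point, not a transversally cut out circle, so $\eta+K\pi^\ast\alpha$ is not near-symplectic near such a point. You must first trade achiral points for round (fold) singularities (Lekili, Baykur), and even then positivity of $\eta$ on the fibers on \emph{both} sides of a fold circle is a nontrivial condition (on a contractible $W$ it must be arranged with exact $\eta$, using that the fibers have boundary on $\partial W$) and the rank-$3$ vanishing along the fold is a computation in the round-handle model, not automatic. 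Second, in the handle version, ``$3$-handles and the closing $4$-handle are treated dually, at the possible cost of further zero circles'' is where essentially all of Gay--Kirby's work lives: upside-down $2$-handles attach along the \emph{concave} side, and one needs their near-symplectic cobordism machinery (and overtwistedness of the intermediate contact structure) to cap off while keeping control of the form; it is not a formal dualization. Neither gap is fatal --- both are repaired by results in the literature --- but as stated these steps do not go through, whereas the paper's Hodge-theoretic argument avoids them entirely.
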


The zero-circles of a near-symplectic form are not all the same. They come in two ``types'' depending on the behavior of $\omega$ near them (see the author's paper \cite{Gerig:taming}), called \textit{untwisted} and \textit{twisted}. By work of Luttinger (details by Perutz and Taubes \cite{Perutz:near,Taubes:Luttinger}), $\omega$ can be modified so that $\omega^{-1}(0)$ has any positive number of components, and that a twisted zero-circle can be traded for two untwisted zero-circles. But as noted by Gompf (see Perutz \cite{Perutz:near}*{Theorem 1.8}) in the case of closed 4-manifolds, the parity of the number of untwisted zero-circles is a priori determined by the cohomology of the closed 4-manifold. It follows that an asymptotically standard near-symplectic form on $X^\ast$ must have an even number of untwisted zero-circles: The reason is that we can glue this 2-form into any near-symplectic closed 4-manifold (by the Darboux theorem) to build another near-symplectic form, so it must preserve the parity of the number of untwisted zero-circles of the original near-symplectic form.

\begin{question}
\label{question}
If $X^\ast$ is symplectic then an asymptotically standard near-symplectic form $\omega$ can be modified to remove its zero-set, but can this be done locally? When $\omega^{-1}(0)$ is a single twisted circle, is there a unique pseudoholomorphic disk bounding it? When $\omega^{-1}(0)$ is two circles (both of the same type), is there a unique pseudoholomorphic cylinder bounding it?
\end{question}

\begin{example}
\label{ex:birth}
The standard symplectic form on $\CC^2$ with complex coordinates $(z,w)$ is $\omega_{\op{std}}=\frac{i}{2}(dz\wedge d\bar z+dw\wedge d\bar w)$. A version of Luttinger's birth model \cite{LuttingerSimpson} is a family of near-symplectic forms on $\CC^2$
$$\omega_\e=\frac{i}{2}\left[(-\e+|z|^2-|w|^2)(dz\, d\bar z+dw\, d\, w)+(Rw-\bar z\bar w)dz\, dw-(R\bar w-zw)d\bar z\, d\bar w\right]$$
with parameter $\e\in\RR$ and fixed $R\gg1$. For $0<\e\ll1$ there are two zero-circles, one of which dies as $\e$ tends to 0, namely $Z_\e=\left\{(z,0)\in\CC^2\;|\;|z|^2=\e\right\}$. For fixed such $\e$ we can modify a constant multiple of $\omega_\e$ on the complement of the radius $\sqrt{2\e}$ ball about the origin (which contains $Z_\e$), in such a way that the other zero-circle is destroyed and that it agrees with $\omega_{\op{std}}$ on the complement of the radius $32\sqrt{2\e}$ ball (see \cite{Taubes:Luttinger}*{\S2} for details). The result is an asymptotically standard near-symplectic form with a single twisted zero-circle, and $\lbrace(z,0)\in\CC^2\;|\;|z|^2\le\e\rbrace$ is an $i$-holomorphic disk bounding $Z_\e$.
\end{example}

With respect to Question~\ref{question}, Taubes suggested (on general near-symplectic 4-manifolds) that the existence of certain pseudoholomorphic cylinders between zero-circles may be used to cancel the zero-circles, analogous to the ``Morse cancellation lemma'' for certain gradient flowlines between critical points of a Morse function \cite{Taubes:geom, Taubes:geomICM, Taubes:broken}. If successful this would reduce Conjecture~\ref{conj} to the Schoenflies conjecture. But the methodology taken in this paper is the opposite, we suggest that the existence of pseudoholomorphic curves may prevent the cancellation of zero-circles. That is, we would like to build invariants of $X$ by counting pseudoholomorphic curves in a completion of $X^\ast-\omega^{-1}(0)$ for any asymptotically standard near-symplectic form $\omega$. The significance is that they would give obstructions to removing the zero-circles of $\omega$, and may detect counterexamples to Conjecture~\ref{conj}.\footnote{In this direction, if we are given an explicit handlebody decomposition of $X$ involving a single 4-handle and no 3-handles, then \cite{Scott:thesis} builds explicit near-symplectic 2-forms on $X^\ast$ based on 2-handle data. One stumbling block here is the Andrews--Curtis conjecture in group theory \cite{AndrewsCurtis}, which if true would imply that any homotopy sphere given as a handlebody without 3-handles is standard.}

We build such an invariant by mimicking the construction of the Gromov invariants of closed near-symplectic 4-manifolds (by the author \cite{Gerig:taming, Gerig:Gromov}), in turn using embedded contact homology and its known isomorphism with monopole Floer homology. Unfortunately, it is not sensitive enough. We clarify and summarize this as the following main result.

\bigskip\noindent
\textbf{Main Result 1}. \textit{Fix an asymptotically standard near-symplectic form $\omega$ on $X^\ast$ which has no twisted zero-circles. For a suitable neighborhood $\nN$ of $\omega^{-1}(0)$ such that $X^\ast-\nN$ is a symplectic manifold with contact boundary (as described in Lemma~\ref{lem:nbhd}), there exists a well-defined element $Gr_{X,\omega}$ in the embedded contact homology $ECH_\ast(\partial\nN)$ as described in Theorem~\ref{thm:class}, obtained by a suitable count of punctured pseudoholomorphic curves in a completion of $X^\ast-\nN$ which are asymptotic to specific Reeb orbits in $\partial\nN$. There is also a relative Seiberg--Witten invariant associated with $X^\ast$ as described in Theorem~\ref{thm:SW}, and its value does not depend on $X^\ast$. Then $Gr_{X,\omega}$ is identified with this relative Seiberg--Witten invariant (Theorem~\ref{thm:Gr}), so it is not able to detect potentially exotic 4-spheres.}

\begin{remark}
\label{rmk:orient}
We work over $\ZZ/2$ when relating $Gr_{X,\omega}$ to Seiberg--Witten theory, but we expect the same argument to apply over $\ZZ$ once we figure out how the maps in Proposition~\ref{prop:multivalued} intertwine the ``coherent orientations'' of the moduli spaces (see \cite{Gerig:Gromov} for further clarification on this matter).
\end{remark}

Here is an outlook on this null-result. The near-symplectic ECH-type invariant $Gr_{X,\omega}$ is seen to be inherently related to Seiberg--Witten theory, and subsequently not helpful to distinguish 4-spheres. Now there is also the broader machinery of SFT (symplectic field theory) to consider, which subsumes ECH in some sense. So we may try to define an analogous near-symplectic SFT-type invariant -- but this also turns out not to be helpful (see Section~\ref{Symplectic field theory}). These facts may suggest that the existence of pseudoholomorphic curves won't obstruct the removal of zero-circles and will instead be useful in the removal of them; otherwise we have to consider more intricate moduli spaces of pseudoholomorphic curves.

The proof that $Gr_{X,\omega}$ is independent of the smooth structure on $X$ may be rephrased in terms of monopole Floer cobordism maps. Given either a homotopy 4-sphere or any closed smooth 4-manifold, by removing two disjoint small 4-balls from it we obtain a cobordism from $S^3$ to $S^3$. There are then three induced cobordism maps on monopole Floer homology corresponding to the three flavors $\Hto,\Hfrom,\overline{HM}$. When $b^2_+>0$ these maps are all trivial (see Kronheimer \& Mrowka \cite{KM:book}*{\S27.3}) and for a homotopy 4-sphere these maps are all the identity, so they are not viable options to distinguish smooth structures.\footnote{When $b^2_+>0$ there is also a ``mixed'' cobordism map $\overrightarrow{HM}_\ast:\Hfrom_\ast(S^3)\to\Hto_\ast(S^3)$ which may be used to recover the Seiberg--Witten invariants of the 4-manifold, but it is not defined for homotopy 4-spheres.} But we go further and compute $Gr_{X,\omega}$ (or really the corresponding relative Seiberg--Witten invariant) to get the following existence result.

\bigskip\noindent
\textbf{Main Result 2}. \textit{For any homotopy 4-sphere $X$ and given any asymptotically standard near-symplectic form $\omega$ on $X^\ast$ which has no twisted zero-circles, there always exists a pseudoholomorphic curve in the complement of $\omega^{-1}(0)$ in $X^\ast$ which homologically bounds $\omega^{-1}(0)$ in the sense that it has intersection number 1 with every linking 2-sphere of $\omega^{-1}(0)$, and whose components are either embedded or a disjoint cover of an embedded plane.}

\bigskip
This 2nd main result is a corollary to the 1st main result, using Theorem~\ref{thm:class} to explicitly spell out the asymptotics of the relevant curves and noting that $Gr_{\RR^4,\omega_{\op{std}}}\equiv1\mod2$. This leaves the question of how to make use of such a curve.

\subsection*{Acknowledgements}
The author thanks Robert Gompf, Michael Hutchings, Peter Kronheimer, Tomasz Mrowka, Clifford Taubes, and Boyu Zhang. As a historical remark, Hutchings discussed related ideas with Yakov Eliashberg during his postdoctoral position at Stanford in 2002, which are subsequently related to the ideas of Taubes. This material is based upon work supported by the National Science Foundation under Award \#1803136.

\section{Brief review of pseudoholomorphic curve theory}
\label{Brief review of pseudoholomorphic curve theory}

We introduce most of the terminology and notations that appear later on with respect to symplectic geometry. More details are found in \cite{Hutchings:lectures}.

\subsection{Orbits}
\label{Orbits}

Let $(Y,\lambda)$ be a closed contact 3-manifold, oriented by $\lambda\wedge d\lambda>0$, and let $\xi=\ker\lambda$ be its contact structure. With respect to the \textit{Reeb vector field} $R$ determined by $d\lambda(R,\cdot)=0$ and $\lambda(R)=1$, a Reeb orbit is a map $\gamma:\RR/T\ZZ\to Y$ for some $T>0$ with $\gamma'(t)=R(\gamma(t))$, modulo reparametrization. A given Reeb orbit is \textit{nondegenerate} if the linearization of the Reeb flow around it does not have 1 as an eigenvalue, in which case the eigenvalues are either on the unit circle (such $\gamma$ are \textit{elliptic}) or on the real axis (such $\gamma$ are \textit{hyperbolic}). Assume from now on that $\lambda$ is \textit{nondegenerate}, i.e. all Reeb orbits are nondegenerate. 

An \textit{orbit set} is a finite set of pairs $\Theta=\lbrace(\Theta_i,m_i)\rbrace$ where the $\Theta_i$ are distinct embedded Reeb orbits and the $m_i$ are positive integers (which may be empty). An orbit set is $\textit{admissible}$ if $m_i=1$ whenever $\Theta_i$ is hyperbolic. Its homology class is defined by
$$[\Theta]:=\sum_im_i[\Theta_i]\in H_1(Y;\ZZ)$$
For a given $\Gamma\in H_1(Y;\ZZ)$, the \textit{ECH chain complex} $ECC_\ast(Y,\lambda,J,\Gamma)$ is freely generated over $\ZZ/2$ by admissible orbit sets representing $\Gamma$. The differential $\partial_\text{ECH}$ will be defined momentarily.

\subsection{Curves}
\label{Curves}

Given two contact manifolds $(Y_\pm,\lambda_\pm)$, possibly disconnected or empty, a \textit{strong symplectic cobordism} from $(Y_+,\lambda_+)$ to $(Y_-,\lambda_-)$ is a compact symplectic manifold $(X,\omega)$ with oriented boundary
$$\partial X=Y_+\sqcup -Y_-$$
such that $\omega|_{Y_\pm}=d\lambda_\pm$.
We can always find neighborhoods $N_\pm$ of $Y_\pm$ in $X$ diffeomorphic to $(-\e,0]\times Y_+$ and $[0,\e)\times Y_-$, such that $\omega|_{N_\pm}=d(e^{\pm s}\lambda_\pm)$ where $s$ denotes the coordinate on $(-\e,0]$. We then glue symplectization ends to $X$ to obtain the \textit{completion}
$$\overline X:=\big((-\infty,0]\times Y_-\big)\cup_{Y_-}X\cup_{Y_+}\big([0,\infty)\times Y_+\big)$$
of $X$, a noncompact symplectic 4-manifold whose symplectic form is also denoted by $\omega$. We will also use the notation $\overline X$ to denote the symplectization $\RR\times Y$ of $(Y,\lambda)$, with $\omega=d(e^s\lambda)$.

An almost complex structure $J$ on a symplectization $(\RR\times Y,d(e^s\lambda))$ is \textit{symplectization-admissible} if it is $\RR$-invariant; $J(\partial_s)=R$; and $J(\xi)\subseteq\xi$ such that $d\lambda(v,Jv)\ge 0$ for $v\in\xi$. An almost complex structure $J$ on the completion $\overline X$ is \textit{cobordism-admissible} if it is $\omega$-compatible on $X$ and agrees with symplectization-admissible almost complex structures on the ends $[0,\infty)\times Y_+$ and $(-\infty,0]\times Y_-$.

\bigskip
Given a cobordism-admissible $J$ on $\overline X$ and orbit sets $\Theta^+=\lbrace(\Theta^+_i,m^+_i)\rbrace$ in $Y_+$ and $\Theta^-=\lbrace(\Theta^-_j,m^-_j)\rbrace$ in $Y_-$, a \textit{$J$-holomorphic curve $\cC$ in $\overline X$ from $\Theta^+$ to $\Theta^-$} is defined as follows. It is a $J$-holomorphic map $\cC\to \overline X$ whose domain is a possibly disconnected punctured compact Riemann surface, defined up to composition with biholomorphisms of the domain, with positive ends of $\cC$ asymptotic to covers of $\Theta^+_i$ with total multiplicity $m^+_i$, and with negative ends of $\cC$ asymptotic to covers of $\Theta^-_j$ with total multiplicity $m^-_j$ (see \cite{Hutchings:lectures}*{\S3.1}). The moduli space of such curves up to equivalence is denoted by $\mM(\Theta^+,\Theta^-)$, where two such curves are considered equivalent if they represent the same current in $\overline X$, and in the case of a symplectization $\overline X=\RR\times Y$ the equivalence includes translation of the $\RR$-coordinate. An element $\cC\in\mM(\Theta^+,\Theta^-)$ can thus be viewed as a finite set of pairs $\lbrace(C_k,d_k)\rbrace$ or formal sum $\sum d_kC_k$, where the $C_k$ are distinct irreducible somewhere-injective\footnote{It's well known that a connected $J$-holomorphic curve is either somewhere-injective or multiply covered.} $J$-holomorphic curves and the $d_k$ are positive integers. 

Let $H_2(X,\Theta^+,\Theta^-)$ be the set of relative 2-chains $\Sigma$ in $X$ such that
$$\partial \Sigma=\sum_im^+_i\Theta^+_i-\sum_jm^-_j\Theta^-_j$$
modulo boundaries of 3-chains. It is an affine space over $H_2(X;\ZZ)$, and every curve $\cC$ defines a relative class $[\cC]\in H_2(X,\Theta^+,\Theta^-)$.

\subsection{Homology}
\label{Homology}

The \textit{ECH index} $I(\cC)$ of a current $\cC\in\mM(\Theta^+,\Theta^-)$ is an integer depending only on its relative class in $H_2(X,\Theta^+,\Theta^-)$, and is the local expected dimension of this moduli space of $J$-holomorphic currents (see \cite{Hutchings:lectures}*{\S3}). Denote by $\mM_I(\Theta^+,\Theta^-)$ the subset of elements in $\mM(\Theta^+,\Theta^-)$ that have ECH index $I$. 

Given admissible orbit sets $\Theta^\pm$ of $(Y,\lambda)$, the coefficient $\langle\partial_\text{ECH}\Theta^+,\Theta^-\rangle\in\ZZ/2$ is the count (modulo 2) of elements in $\mM_1(\Theta^+,\Theta^-)$ on the symplectization $\overline X=\RR\times Y$. If $J$ is generic then $\partial_\text{ECH}$ is well-defined and $\partial^2_\text{ECH}=0$. The resulting homology is independent of the choice of $J$, depends only on $\xi$ and $\Gamma$, and is denoted by $ECH_\ast(Y,\xi,\Gamma)$. The grading is by homotopy classes of oriented 2-plane fields on $Y$, and there is a transitive $\ZZ$-action on this grading set (see \cite{Hutchings:revisited}*{\S 3}).

\subsection{L-flat approximations}
\label{L-flat approximations}

The symplectic action of an orbit set $\Theta=\lbrace(\Theta_i,m_i)\rbrace$ is defined by
$$\aA(\Theta):=\sum_im_i\int_{\Theta_i}\lambda$$
The symplectic action induces a filtration on the ECH chain complex. For a positive real number $L$, the $L$-\textit{filtered ECH} is the homology of the subcomplex $ECC_\ast^L(Y,\lambda,J,\Gamma)$ spanned by admissible orbit sets of action less than $L$. The ordinary ECH is recovered by taking the direct limit over $L$, via maps induced by inclusions of the filtered chain complexes.

For a fixed $L>0$ it is convenient (and possible) to modify $\lambda$ and $J$ on small tubular neighborhoods of all Reeb orbits of action less than $L$, in order to relate $J$-holomorphic curves to Seiberg--Witten theory most easily. The desired modifications of $(\lambda,J)$ are called \textit{L-flat approximations}, and were introduced by Taubes in \cite{Taubes:ECH=SWF1}*{Appendix}. They induce isomorphisms on the $L$-filtered ECH chain complex, and the key fact here is that $L$-flat orbit sets are in bijection with Seiberg--Witten solutions of ``energy'' less than $2\pi L$ (see Section~\ref{Taubes' isomorphisms}).

\section{Gromov invariant for homotopy 4-spheres}
\label{Gromov invariant for homotopy 4-spheres}

Fix an asymptotically standard near-symplectic form $\omega$ on $X^\ast$ having $N\ge0$ untwisted zero-circles ($N$ is even) and no twisted zero-circles, which exists by Theorem~\ref{LuttingerTaubes}.

Let $X^o$ denote the compact submanifold with $S^3$ boundary such that $\omega$ is standard on $X^\ast-X^o$ (view $X^o$ as the closure of the manifold obtained from $X$ by removing a standard 4-ball) and such that $\omega|_{\partial X^o}=\omega_{\op{std}}|_{S^3}=d\lambda_{\op{std}}$, where $\lambda_{\op{std}}$ denotes a generic $C^\infty$-small perturbation of the tautological 1-form $\frac12(x_1dy_1-y_1dx_1+x_2dx_2-y_2 dx_2)$ on $S^3$ (so $\lambda_{\op{std}}$ is a nondegenerate contact form).

Let $\nN$ denote the union of arbitrarily small tubular neighborhoods of all components of $\omega^{-1}(0)\subset X^\ast$, so it is diffeomorphic to the disjoint union of $N$ copies of $S^1\times B^3$. The orientation of each zero-circle is pinned down using $\nabla\omega$ and the orientation of $X^\ast$ according to the orientation conventions specified in \cite{Gerig:taming}*{\S1.2, \S3.1}, and in particular gives the identification of $H_1(S^1\times S^2)$ with $\ZZ$ on each component of $\partial\nN$ such that $[S^1\times\lbrace pt\rbrace]\mapsto-1$. With that said, consider the relative homology class
$$A_\1\in H_2(X^o-\nN,\partial\nN;\ZZ)\cong H_1(\partial\nN;\ZZ)$$
uniquely specified by
$$\partial A_\1=\1:=(1,\ldots,1)\in\ZZ^N\cong H_1(\partial\nN;\ZZ)$$
under the long exact sequence for the pair $(X^o-\nN,\partial\nN)$.\footnote{The map $H_2(\partial\nN;\ZZ)\to H_2(X^o-\nN;\ZZ)$ in the long exact sequence is an isomorphism, as seen using the Mayer-Vietoris sequence for the union $(X^o-\nN)\cup\nN=X^o$.} We have the following useful choice of $\nN$ granted by \cite{Gerig:taming}*{Lemma 1.6}.

\begin{lemma}
\label{lem:nbhd}
The tubular neighborhood $\nN$ may be chosen in such a way that $(X^o-\nN,\omega)$ is a strong symplectic cobordism from $(S^3,\lambda_{\op{std}})$ to $N$ copies of $(S^1\times S^2,\lambda_{\op{ns}})$. Here, $\lambda_{\op{ns}}$ is an overtwisted contact form whose orbits of symplectic action less than $\rho(A_\1)$ are all $\rho(A_\1)$-flat and are either hyperbolic or $\rho(A_\1)$-positive elliptic.
\end{lemma}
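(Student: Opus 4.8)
The plan is to recognize this as a localized version of \cite{Gerig:taming}*{Lemma 1.6}. Since the construction takes place entirely inside tubular neighborhoods of the zero-circles of $\omega$, the argument given there for closed near-symplectic 4-manifolds applies without change, and the only new feature to check is that the extra end modeled on $S^3$ causes no trouble. I would begin by noting that the $(S^3,\lambda_{\op{std}})$ end is automatic: by construction $\omega=\omega_{\op{std}}$ on $X^\ast-X^o$, we have $\omega|_{\partial X^o}=d\lambda_{\op{std}}$, and $\partial X^o$ is a convex (contact-type) hypersurface for $\omega_{\op{std}}$, so a collar of $\partial X^o$ in $X^o$ is symplectomorphic to $\big((-\e,0]\times S^3,\,d(e^s\lambda_{\op{std}})\big)$ regardless of how $\nN$ is chosen away from it. Everything then reduces to arranging the inner boundary $\partial\nN$ near the $N$ untwisted zero-circles.

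Next I would recall the local model near an untwisted zero-circle $Z\cong S^1$. By the normal form for near-symplectic forms \cite{Perutz:near} there are coordinates $(\theta,x)\in(\RR/2\pi\ZZ)\times B^3_{r_0}$, compatible with the orientations of $Z$ and of $X^\ast$, in which $\omega$ is the standard self-dual harmonic model; in particular $\omega$ vanishes transversally along $\{x=0\}$ and is exact on $S^1\times(B^3_{r_0}-\{0\})$. For $0<\rho<r_0$ I would take $\nN_Z:=S^1\times B^3_\rho$, so that $\partial\nN_Z=S^1\times S^2_\rho$, and verify in the model that $\omega|_{\partial\nN_Z}=d\lambda_\rho$ for an explicit 1-form $\lambda_\rho$ which, after a $\rho$-dependent conformal rescaling, is a contact form $\lambda_{\op{ns}}$ on $S^1\times S^2$ whose kernel is overtwisted (distinguished from the tight contact structure by its homotopy class of oriented 2-plane field, as in \cite{Gerig:taming}). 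The Reeb flow of $\lambda_{\op{ns}}$ is Morse--Bott, with closed orbits sweeping out the two circles $S^1\times\{x_\pm\}$ over the two poles $x_\pm\in S^2_\rho$ and all other orbits of large action; a standard Morse--Bott perturbation then replaces each of these circles by two embedded Reeb orbits of small action, one hyperbolic and one elliptic.

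To obtain the flatness demanded by the later comparison with Seiberg--Witten theory, I would then apply Taubes's $L$-flat approximation (\S\ref{L-flat approximations}) with $L=\rho(A_\1)$: this modifies $(\lambda_{\op{ns}},J)$ only inside disjoint tubular neighborhoods of the finitely many Reeb orbits of action less than $\rho(A_\1)$, makes each of them $\rho(A_\1)$-flat, arranges the elliptic ones to be $\rho(A_\1)$-positive, and leaves the symplectization collars untouched, so that $(X^o-\nN,\omega)$ remains a strong symplectic cobordism with the stated ends.

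The hard part is the bookkeeping of scales and the order in which the choices are made: shrinking $\nN$ changes the cobordism $X^o-\nN$ and hence the constant $\rho(A_\1)$ attached to the capping class $A_\1$, so I would first fix $\rho$ small enough that on each $\partial\nN_Z$ the only Reeb orbits of action less than $\rho(A_\1)$ are the short ones over the poles, and that none enter from the $S^3$ end, and only afterwards carry out the $L$-flat modification. With the choices organized in this order, the remaining verification is exactly that of \cite{Gerig:taming}*{Lemma 1.6}, now with the harmless $(S^3,\lambda_{\op{std}})$ end carried along for free.
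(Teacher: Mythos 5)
Your proposal is correct and takes essentially the same approach as the paper, which offers no argument beyond invoking \cite{Gerig:taming}*{Lemma 1.6}: your reconstruction — the construction is entirely local near the zero-circles, the $(S^3,\lambda_{\op{std}})$ end comes for free since $\omega=\omega_{\op{std}}$ there, and the choices must be ordered so that $\rho(A_\1)$ is pinned down before the $L$-flat approximation is performed — is exactly how that lemma is meant to be transplanted to $X^o-\nN$. The one imprecise spot is your description of the short Reeb dynamics: an isolated closed orbit $S^1\times\{x_\pm\}$ is not a Morse--Bott family and cannot ``split into two orbits'' under perturbation, so the elliptic/hyperbolic short-orbit list should instead be read off directly from the explicit contact form in the local model as in \cite{Gerig:taming}; this does not affect the structure of the argument.
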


We have yet to define the quantity $\rho(A_\1)\in\RR$ and the adjective ``$\rho(A_\1)$-positive'' (but the notion of ``flatness'' was clarified in Section~\ref{L-flat approximations}). In order to obtain well-defined counts of pseudoholomorphic curves in $X^o-\nN$ which represent the relative class $A_\1$, we need to ensure a uniform bound on their energy as well as a bound on the symplectic action of their orbit sets, and we need to guarantee transversality of the relevant moduli spaces of curves (specifically, to rule out negative ECH index curves). The quantity $\rho(A_\1)$ provides the bounds, and the adjective ``$\rho(A_\1)$-positive'' ultimately ensures transversality -- we will not define this adjective here (but see \cite{Gerig:taming}*{\S3.2}).

\begin{defn}
$$\rho(A_\1):=\int_\Sigma\omega-\int_{\partial\Sigma\cap S^3}\lambda_{\op{std}}+\sum_{k=1}^N\int_{\partial\Sigma\cap(S^1\times S^2)_k}\lambda_{\op{ns}}$$
where $u:\Sigma\to X^o-\nN$ is any smooth map representing $A_\1\in H_2(X^o-\nN,\partial\nN;\ZZ)$, such that $\Sigma$ is a compact oriented smooth surface with boundary satisfying $u(\partial \Sigma)\subset \partial(X^o-\nN)$.
\end{defn}

Note that $\rho(A_\1)$ need not be $0$ even though $\omega$ is an exact 2-form on $X^o-\nN$, because a primitive 1-form $\nu$ (such that $\omega=d\nu$) need not agree with our contact forms on any copy of $S^1\times S^2$. We can only arrange that $\nu|_{S^3}=\lambda_{\op{std}}$ since $H^1(S^3;\RR)=0$.  In the literature, the cobordism $(X^o-\nN,\omega)$ is called \textit{weakly exact}.

Fix a cobordism-admissible almost complex structure $J$ on the completion $(\overline{X^o-\nN},\omega)$, as specified in Section~\ref{Curves}. We now present counts of $J$-holomorphic curves which assemble into a well-defined element of tensor products of copies of $ECH_\ast(S^1\times S^2,\xi_{\op{ns}},1)$. We recall from Section~\ref{Homology} that the set $\mM_0(\varnothing,X^o-\nN,\Theta)$ consists of $J$-holomorphic currents which have negative ends asymptotic to $\Theta$ (and no positive ends). Let $\mM_0(\varnothing,\Theta;A_\1)$ denote the subset of elements in $\mM_0(\varnothing,X^o-\nN,\Theta)$ which represent the class $A_\1$. Define the chain
\begin{equation}
\label{eqn:chain}
\sum_\Theta\#\mM_0(\varnothing,\Theta_\1;A)\cdot\Theta\in\bigotimes_{k=1}^NECC_\ast(S^1\times S^2,\lambda_{\op{ns}},1)
\end{equation}
where $\Theta$ indexes over the admissible orbit sets (representing $\1$) -- the implicit fact that each moduli space $\mM_0(\varnothing,\Theta_\1;A)$ is a finite set of points is subsumed in the following theorem.

\begin{thm}
\label{thm:class}
For generic $J$, the chain~\eqref{eqn:chain} induces a well-defined element
$$Gr_{X,\omega}\in\bigotimes_{k=1}^NECH_\ast(S^1\times S^2,\xi_{\op{ns}},1)$$
concentrated in a single absolute grading.
\end{thm}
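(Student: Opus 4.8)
The plan is to follow the standard template by which Gromov-type invariants are extracted from counts of pseudoholomorphic curves in ECH-style setups, adapting the closed-manifold construction of \cite{Gerig:taming, Gerig:Gromov} to the relative (weakly exact cobordism) situation at hand. There are four things to establish: (1) each moduli space $\mM_0(\varnothing,\Theta_\1;A_\1)$ is a finite set of points, so that the chain \eqref{eqn:chain} is well-defined as a finite sum; (2) the chain is a \emph{cycle} in $\bigotimes_{k=1}^N ECC_\ast(S^1\times S^2,\lambda_{\op{ns}},1)$; (3) its homology class is independent of the auxiliary choices (generic $J$, the $L$-flat data, the tubular neighborhood $\nN$ within the family granted by Lemma~\ref{lem:nbhd}); and (4) the resulting class lies in a single absolute grading.

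For (1), the key inputs are the energy/action bounds: by construction $\rho(A_\1)$ bounds the symplectic action of the orbit set $\Theta$ of any current representing $A_\1$ (the action is $\aA(\Theta)=\rho(A_\1)-\int_\Sigma\omega+\int_{\partial\Sigma\cap S^3}\lambda_{\op{std}}$ rewritten via the definition, and weak exactness controls the $\omega$-term), so only finitely many admissible $\Theta$ contribute; for each such $\Theta$, Lemma~\ref{lem:nbhd} ensures the relevant orbits are $\rho(A_\1)$-flat and either hyperbolic or $\rho(A_\1)$-positive elliptic, and the ``$\rho(A_\1)$-positive'' condition rules out negative-ECH-index curves, so that for generic $J$ the index-$0$ moduli space is a compact $0$-manifold, hence finite. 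Gromov compactness in the completion $\overline{X^o-\nN}$, together with the energy bound coming from $\rho(A_\1)$ and the asymptotically standard behavior on the cylindrical end modeled on $(S^3,\lambda_{\op{std}})$, gives the needed compactness. For (2), one analyzes the index-$1$ moduli spaces: their ends are described (by the usual SFT/ECH gluing and compactness package) by index-$0$ curves in $X^o-\nN$ glued to index-$1$ cylinders in the symplectization ends $\RR\times(S^1\times S^2)_k$ — i.e. the $\partial_{\op{ECH}}$ on one tensor factor — plus possible ends in the $(S^3,\lambda_{\op{std}})$ symplectization, which are excluded because there are no negative ECH-index curves there and, by our choice of $\lambda_{\op{std}}$ and the fact that $A_\1$ has no positive end, no index-$1$ breaking off the top can occur; counting ends mod $2$ then yields $\partial(\text{chain})=0$. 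For (3), the standard argument uses a generic path of almost complex structures (and a deformation of the $L$-flat data / neighborhood $\nN$) to produce a chain homotopy via index-$1$ moduli spaces over the path, again invoking the action and transversality bounds uniformly along the path; here the flexibility built into Lemma~\ref{lem:nbhd} and the invariance of $L$-filtered ECH under $L$-flat approximations (Section~\ref{L-flat approximations}) are what make the target group — and the element — well-defined.

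For (4), the single-grading statement follows from the additivity of the ECH index under concatenation: any two orbit sets $\Theta,\Theta'$ that both appear with nonzero coefficient are connected through $X^o-\nN$ by currents in the \emph{same} relative class $A_\1$, and $I$ depends only on the relative class, so $I(\Theta)-I(\Theta')$ is forced to vanish in the (absolute) ECH grading of the tensor product; equivalently, the grading of $\Theta$ in $\bigotimes_k ECH_\ast$ is pinned by $A_\1$ via the relative grading / index formula, so the cycle is homogeneous. I expect the main obstacle to be step (2) — the cycle property — specifically the careful bookkeeping of \emph{which} boundary strata of the index-$1$ moduli spaces actually occur: one must rule out breaking along Reeb orbits of $(S^3,\lambda_{\op{std}})$ and along multiply-covered or branched configurations in the completion, and confirm that the surviving strata are exactly (index-$0$ curve)$\,\times\,$(index-$1$ symplectization cylinder over a single $(S^1\times S^2)_k$ factor), so that the algebraic count assembles into $\sum_k \1\otimes\cdots\otimes\partial_{\op{ECH}}\otimes\cdots\otimes\1$ applied to \eqref{eqn:chain}. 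This is where the ``$\rho(A_\1)$-positive elliptic'' hypothesis and the asymptotic-standardness of $\omega$ near $\partial X^o$ do the real work, and it is the step most sensitive to the precise near-symplectic geometry recorded in Lemma~\ref{lem:nbhd}.
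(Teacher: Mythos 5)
Your proposal is correct and follows essentially the same route as the paper, which simply observes that since the curves in $\mM_0(\varnothing,\Theta;A_\1)$ have no positive ends, the cobordism $(X^o-\nN,\omega)$ can be treated as one with only negative boundary and the compactness, transversality, cycle, and grading arguments of \cite{Gerig:taming} carry over verbatim (with the minor simplification, which you implicitly cover, that no weighted count is needed because $X^o-\nN$ contains no non-constant closed holomorphic curves). Your more detailed breakdown — the action bound from $\rho(A_\1)$, exclusion of breaking along $(S^3,\lambda_{\op{std}})$, and the single-grading argument via the relative class $A_\1$ — is exactly the content the paper delegates to that reference.
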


\begin{proof}
The fact that the chain is a cycle follows \cite{Gerig:taming} verbatim, using the conditions granted by Lemma~\ref{lem:nbhd}. The reason we may copy the arguments in \cite{Gerig:taming} is that there are no positive ends of the relevant pseudoholomorphic curves, so we may treat $(X^o-\nN,\omega)$ as if it were a symplectic cobordism with only negative boundary components. Note that we do not take a \textit{weighted} count of elements of each $\mM_0(\varnothing,\Theta_\1;A)$ when defining $Gr_{X,\omega}$, contrasted with that in \cite{Gerig:taming}, because there are no non-constant \textit{closed} pseudoholomorphic curves in $X^o-\nN$.
\end{proof}

\begin{remark}
As explained in \cite{Gerig:taming}, $ECH_\ast(S^1\times S^2,\xi_{\op{ns}},\Gamma)=0$ for $\Gamma\ne1$. This is why we only consider the relative class $A_\1$ satisfying $\partial A_\1|_{S^1\times S^2}=1$.
\end{remark}

\begin{remark}
We can define $Gr_{X,\omega}$ over $\ZZ$ by introducing orientations, but see Remark~\ref{rmk:orient}.
\end{remark}

We would hope that $Gr_{X,\omega}$ only depends on $X$. In particular, part of this project would involve showing that $Gr_{X,\omega}$ does not depend on the choice of near-symplectic form, by analyzing the moduli spaces of pseudoholomorphic curves as $\omega$ deforms and $X^\ast-\omega^{-1}(0)$ changes topological type. In this regard, if $\omega$ is symplectic (i.e. $N=N^\sigma=0$) then $Gr_{X,\omega}$ lives in $ECH_0(\varnothing,0,0)\cong\ZZ/2$ generated by the empty orbit set, or alternatively we can remove a Darboux ball $D$ from $X^\ast$ to view $Gr_{X,\omega}$ in $ECH_0(S^3,\xi_{\op{std}},0)\cong\ZZ/2$ generated by the empty orbit set. Then $Gr_{X,\omega}=1$ because only the empty curve exists, representing $0\in H_2(X^\ast-D,S^3;\ZZ)\cong H_2(X^\ast;\ZZ)\cong 0$.

On hindsight it turns out that $Gr_{X,\omega}$ does not depend on $X$ (let alone $\omega$ and $J$), because ECH is related to Seiberg--Witten theory. The rest of this paper will clarify what this means.

\section{Brief review of gauge theory}
\label{Brief review of gauge theory}

We introduce most of the terminology and notations that appear later on with respect to Seiberg--Witten theory. More details are found in \cite{KM:book, HutchingsTaubes:Arnold2}.

\subsection{Contact 3-manifolds}
\label{Contact 3-manifolds}

Let $(Y,\lambda)$ be a closed oriented connected contact 3-manifold, and choose an almost complex structure $J$ on $\xi$ that induces a symplectization-admissible almost complex structure on $\RR\times Y$. There is a compatible metric $g$ on $Y$ such that $|\lambda|=1$ and $\ast\lambda=\frac12d\lambda$, with $g(v,w)=\frac12d\lambda(v,Jw)$ for $v,w\in\xi$.

View a spin-c structure $\fs\in\Spinc(Y)$ on $Y$ as an isomorphism class of a pair $(\SS,\cl)$ consisting of a rank 2 Hermitian vector bundle $\SS\to Y$ (the \textit{spinor bundle}) and Clifford multiplication $\cl:TY\to\End(\SS)$. The contact structure $\xi$ (and more generally, any oriented 2-plane field on $Y$) picks out a canonical spin-c structure $\fs_\xi=(\SS_\xi,\cl)$ with $\SS_\xi=\underline{\CC}\oplus\xi$, where $\underline{\CC}\to Y$ denotes the trivial line bundle, and $\cl$ is defined as follows. Given an oriented orthonormal frame $\lbrace e_1,e_2,e_3\rbrace$ for $T_yY$ such that $\lbrace e_2,e_3\rbrace$ is an oriented orthonormal frame for $\xi_y$, then in terms of the basis $(1,e_2)$ for $\SS_\xi$,
$$\cl(e_1)=\bigl( \begin{smallmatrix}
i&0\\ 0&-i
\end{smallmatrix} \bigr),\indent \cl(e_2)=\bigl( \begin{smallmatrix}
0&-1\\ 1&0
\end{smallmatrix} \bigr),\indent \cl(e_3)=\bigl( \begin{smallmatrix}
0&i\\ i&0
\end{smallmatrix} \bigr)$$
There is then a canonical isomorphism
$$H^2(Y;\ZZ)\to\Spinc(Y)$$
where the 0 class corresponds to $\fs_\xi$. Specifically, there is a canonical decomposition $\SS=E\oplus \xi E$ into $\pm i$ eigenbundles of $\cl(\lambda)$, where $E\to Y$ is the complex line bundle corresponding to a given class in $H^2(Y;\ZZ)$.

A \textit{spin-c connection} is a connection $\bA$ on $\SS$ which is compatible with Clifford multiplication in the sense of \cite{HutchingsTaubes:Arnold2}*{Equation 18}. Such a connection is equivalent to a Hermitian connection (also denoted by $\bA$) on $\det\SS$, and determines a \textit{Dirac operator}
$$D_\bA:\Gamma(\SS)\stackrel{\nabla_\bA}{\longrightarrow}\Gamma(T^\ast Y\otimes\SS)\stackrel{\cl}{\longrightarrow}\Gamma(\SS)$$
With respect to the decomposition $\SS=E\oplus\xi E$, there is a unique connection $A_\xi$ on $\xi$ such that its Dirac operator kills the section $(1,0)\in\Gamma(\SS_\xi)$, and there is a canonical decomposition
$$\bA=A_\xi+2A$$
on $\det\SS=\xi E^2$ with Hermitian connection $A$ on $E$. The gauge group $C^\infty(Y,S^1)$ acts on a given pair $(A,\psi)$ by
$$u\cdot(A,\psi)=(A-u^{-1}du,u\psi)$$
In this paper, a \textit{configuration} $\fc$ refers to a gauge-equivalence class of such a pair.

Fix a suitably generic exact 2-form $\mu\in\Omega^2(Y)$ as described in \cite{HutchingsTaubes:Arnold2}*{\S 2.2}, and a positive real number $r\in\RR$. A configuration $\fc$ solves \textit{Taubes' perturbed Seiberg--Witten equations} when
\begin{equation}
\label{SW3}
D_\bA\psi=0,\indent\indent\ast F_A=r(\tau(\psi)-i\lambda)-\frac12\ast F_{A_\xi}+i\ast\mu
\end{equation}
where $F_{A_\xi}$ is the curvature of $A_\xi$ and $\tau:\SS\to iT^\ast Y$ is the quadratic bundle map which sends $\psi\in\SS$ to the covector $\langle\cl(\cdot)\psi,\psi\rangle$. An appropriate change of variables recovers the usual Seiberg--Witten equations (with perturbations) that appear in \cite{KM:book}. 

\begin{remark}
We have suppressed additional ``abstract tame perturbations'' to these equations required to obtain transversality of the moduli spaces of its solutions (see \cite{KM:book}*{\S 10}), because they do not interfere with the analysis presented in this paper. This is further clarified in \cite{HutchingsTaubes:Arnold2}*{\S 2.1} and \cite{Taubes:ECH=SWF1}*{\S3.h Part 5}, where the same suppression occurs.
\end{remark}

Denote by $\fM(Y,\fs)$ the set of solutions to~\eqref{SW3}, called \textit{(SW) monopoles}. A solution is \textit{reducible} if its $\Gamma(\SS)$-component vanishes, and is otherwise \textit{irreducible}. The monopoles freely generate the monopole Floer chain complex $\Cfrom^\ast(Y,\lambda,\fs,J,r)$. The chain complex differential will not be reviewed here. Denote by $\Cfrom_L^\ast(Y,\lambda,\fs,J,r)$ the submodule generated by irreducible monopoles $\fc$ with energy
$$E(\fc):=i\int_Y\lambda\wedge F_A<2\pi L$$
When $r$ is sufficiently large, $\Cfrom_L^\ast(Y,\lambda,\fs,J,r)$ is a subcomplex of $\Cfrom^\ast(Y,\lambda,\fs,J,r)$ and its homology $\Hfrom_L^\ast(Y,\lambda,\fs,J,r)$ is well-defined and independent of $r$ and $\mu$. Taking the direct limit over $L>0$, we recover the ordinary $\Hfrom^\ast(Y,\fs)$ in \cite{KM:book} which is independent of $\lambda$ and $J$. The grading is also by homotopy classes of oriented 2-plane fields on $Y$, in Section~\ref{Homology} (this convention is opposite to that used in \cite{KM:book}).

\subsection{Symplectic cobordisms}
\label{Symplectic cobordisms}

Let $(X,\omega)$ be a strong symplectic cobordism between (possibly disconnected or empty) closed oriented contact 3-manifolds $(Y_\pm,\lambda_\pm)$. Due to the choice of metric $g_\pm$ on $Y_\pm$ in Section~\ref{Contact 3-manifolds} (and following \cite{HutchingsTaubes:Arnold2}*{\S4.2}), we do not extend $\omega$ over $\overline X$ using $d(e^s\lambda_\pm)$ on the ends $(-\infty,0]\times Y_-$ and $[0,\infty)\times Y_+$. Instead, we extend $\omega$ using $d(e^{2s}\lambda_\pm)$ as follows. Fix a smooth increasing function $\phi_-:(-\infty,\e]\to(-\infty,\e]$ with $\phi_-(s)=2s$ for $s\le\frac{\e}{10}$ and $\phi_-(s)=s$ for $s>\frac{\e}{2}$, and fix a smooth increasing function $\phi_+:[-\e,\infty)\to[-\e,\infty)$ with $\phi_+(s)=2s$ for $s\ge-\frac{\e}{10}$ and $\phi_+(s)=s$ for $s\le-\frac{\e}{2}$, where $\e>0$ is such that $\omega=d(e^s\lambda_\pm)$ on the $\e$-collars of $Y_\pm$. Then the desired extension is
\begin{equation*}
\tilde\omega:=\begin{cases}
    d(e^{\phi_-}\lambda_-) & \text{on}\;\; (-\infty,\e]\times Y_-\\
    \omega & \text{on}\;\; X\setminus\Big(\big([0,\e]\times Y_-\big)\cup\big([-\e,0]\times Y_+\big)\Big)\\
    d(e^{\phi_+}\lambda_+) & \text{on}\;\; [-\e,\infty)\times Y_+
  \end{cases}
\end{equation*}
Now choose a cobordism-admissible almost complex structure $J$ on $(\overline X,\tilde\omega)$. Following \cite{HutchingsTaubes:Arnold2}*{\S4.2}, we equip $\overline X$ with a particular metric $g$ so that it agrees with the product metric with $g_\pm$ on the ends $(-\infty,0]\times Y_-$ and $[0,\infty)\times Y_+$ and so that $\tilde\omega$ is self-dual. Finally, define
$$\widehat\omega:=\sqrt{2}\tilde\omega/|\tilde\omega|_g$$
and note that $J$ is still cobordism-admissible.

The 4-dimensional gauge-theoretic scenario is analogous to the 3-dimensional scenario. View a spin-c structure $\fs$ on $X$ as an isomorphism class of a pair $(\SS,\cl)$ consisting of a Hermitian vector bundle $\SS=\SS_+\oplus\SS_-$, where the spinor bundles $\SS_\pm$ have rank 2, and Clifford multiplication $\cl:TX\to\End(\SS)$ such that $\cl(v)$ exchanges $\SS_+$ and $\SS_-$ for each $v\in TX$. The set $\Spinc(X)$ of spin-c structures is an affine space over $H^2(X;\ZZ)$, and we denote by $c_1(\fs)$ the first Chern class of $\det\SS_+=\det\SS_-$. A spin-c connection on $\SS$ is equivalent to a Hermitian connection $\bA$ on $\det\SS_+$ and defines a Dirac operator $D_\bA:\Gamma(\SS_\pm)\to\Gamma(\SS_\mp)$.

A spin-c structure $\fs$ on $X$ restricts to a spin-c structure $\fs|_{Y_\pm}$ on $Y_\pm$ with spinor bundle $\SS_{Y_\pm}:=\SS_+|_{Y_\pm}$ and Clifford multiplication $\cl_{Y_\pm}(\cdot):=\cl(v)^{-1}\cl(\cdot)$, where $v$ denotes the outward-pointing unit normal vector to $Y_+$ and the inward-pointing unit normal vector to $Y_-$. There is a canonical way to extend $\fs$ over $\overline X$, and the resulting spin-c structure is also denoted by $\fs$. There is a canonical decomposition $\SS_+=E\oplus K^{-1}E$ into $\mp2i$ eigenbundles of $\cl_+(\widehat\omega)$, where $K$ is the canonical bundle of $(\overline X,J)$ and $\cl_+:\bigwedge^2_+T^\ast\overline X\to\End(\SS_+)$ is the projection of Clifford multiplication onto $\End(\SS_+)$. This agrees with the decomposition of $\SS_{Y_\pm}$ on the ends of $\overline X$.

The symplectic form $\omega$ picks out the canonical spin-c structure $\fs_\omega=(\SS_\omega,\cl)$, namely that for which $E$ is trivial, and the $H^2(X;\ZZ)$-action on $\Spinc(X)$ becomes a canonical isomorphism. There is a unique connection $A_{K^{-1}}$ on $K^{-1}$ such that its Dirac operator annihilates the section $(1,0)\in\Gamma((\SS_\omega)_+)$, and we henceforth identify a spin-c connection with a Hermitian connection $A$ on $E$.

In this paper, a \textit{configuration} $\fd$ refers to a gauge-equivalence class of a pair $(\bA,\Psi)$ under the gauge group $C^\infty(X,S^1)$-action. A connection $\bA$ on $\det\SS_+$ is in \textit{temporal gauge} on the ends of $\overline X$ if
$$\nabla_\bA=\frac{\partial}{\partial s}+\nabla_{\bA(s)}$$
on $(-\infty,0]\times Y_-$ and $Y_+\times[0,\infty)$, where $\bA(s)$ is a connection on $\det\SS_{Y_\pm}$ depending on $s$. Connections are placed into temporal gauge by an appropriate gauge transformation.

Fix suitably generic exact 2-forms $\mu_\pm\in\Omega^2(Y_\pm)$ as in Section~\ref{Contact 3-manifolds}, a suitably generic exact 2-form $\mu\in\Omega^2(\overline X)$ that agrees with $\mu_\pm$ on the ends of $\overline X$ (with $\mu_\ast$ denoting its self-dual part), and a positive real number $r\in\RR$. \textit{Taubes' perturbed Seiberg--Witten equations} for a configuration $\fd$ are
\begin{equation}
\label{SW4}
D_\bA\Psi=0,\;\;F^+_A=\frac{r}{2}(\rho(\Psi)-i\widehat\omega)-\frac12F^+_{A_{K^{-1}}}+i\mu_\ast
\end{equation}
where $F_A^+$ is the self-dual part of the curvature of $A$ and $\rho:\SS_+\to\bigwedge^2_+T^\ast X$ is the quadratic bundle map which sends $\Psi\in\SS_+$ to the tensor $-\frac12\big\langle[\cl(\cdot),\cl(\cdot)]\Psi,\Psi\big\rangle$. Similarly to the 3-dimensional equations, there are additional ``abstract tame perturbations'' which have been suppressed (see \cite{KM:book}*{\S24.1}). Given monopoles $\fc_\pm$ on $Y_\pm$, denote by $\fM(\fc_-,X,\fc_+;\fs)$ the set of solutions to~\eqref{SW4} which are asymptotic to $\fc_\pm$ (in temporal gauge on the ends of $\overline X$), called \textit{SW solutions}.

Similarly to ECH, an ``index'' is associated with each SW solution, namely the local expected dimension of the moduli space of SW solutions. Denote by $\fM_k(\fc_-,X,\fc_+;\fs)$ the subset of elements in $\fM(\fc_-,X,\fc_+;\fs)$ that have index $k$.

\subsection{Taubes' isomorphisms}
\label{Taubes' isomorphisms}

With $\ZZ/2$ coefficients, there is a canonical isomorphism of relatively graded modules
$$ECH_\ast(Y,\xi,\Gamma)\cong\Hfrom^{-\ast}(Y,\fs_\xi+\PD(\Gamma))$$
which also preserves the absolute gradings by homotopy classes of oriented 2-plane fields. This isomorphism is constructed on the $L$-filtered chain level. 

\begin{thm}[\cite{Taubes:ECH=SWF1}*{Theorem 4.2}]
\label{thm:generators}
Fix $L>0$ and a generic $L$-flat pair $(\lambda,J)$ on $(Y,\xi)$. Then for $r$ sufficiently large and $\Gamma\in H_1(Y;\ZZ)$, there is a canonical bijection from the set of generators of $ECC^L_\ast(Y,\lambda,\Gamma,J)$ to the set of generators of $\Cfrom_L^\ast(Y,\lambda,\fs_\xi+\PD(\Gamma),J,r)$.
\end{thm}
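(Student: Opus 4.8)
The plan is to carry out Taubes' analysis of the perturbed Seiberg--Witten equations~\eqref{SW3} in the limit $r\to\infty$, which simplifies substantially under the $L$-flat hypothesis. Throughout, write the spinor of a solution as $\psi=(\alpha,\beta)$ with respect to the splitting $\SS=E\oplus\xi E$. The point of the argument is to set up a dictionary in which a solution with small energy is forced, for $r$ large, to ``concentrate'' along a union of short Reeb orbits, and conversely each admissible orbit set of small action supports a unique solution; the numerical glue is the identity $E(\fc)=2\pi\,\aA(\Theta)+O(r^{-1})$ relating the energy filtration to the action filtration.

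First I would establish the a priori estimates. Feeding the Dirac equation $D_\bA\psi=0$ into the Weitzenb\"ock formula together with the curvature equation in~\eqref{SW3} yields $r$-uniform pointwise bounds of the shape $|\alpha|\le 1+c/r$ and $|\beta|^2\le c\,r^{-1}(1-|\alpha|^2)+c\,r^{-2}$, together with uniform derivative bounds on compact sets. The consequence is that $F_A$ concentrates, in the $\lambda$-weighted $L^1$ sense, near the zero set $\alpha^{-1}(0)$, and that this zero set Hausdorff-converges along a subsequence $r\to\infty$ to an embedded union of closed Reeb orbits with positive integer multiplicities, i.e.\ an orbit set $\Theta$, with $E(\fc)=2\pi\,\aA(\Theta)+O(r^{-1})$. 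Matching $c_1(\fs_\xi+\PD(\Gamma))$ against the line bundle $E$ gives $[\Theta]=\Gamma$, and examining the local structure of a solution transverse to a hyperbolic orbit rules out a multiply-covered hyperbolic component, so $\Theta$ is admissible. This defines the map $\fc\mapsto\Theta_\fc$ on generators of $\Cfrom^L_\ast$.

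For the converse and the bijection I would use the $L$-flat normal form of Taubes \cite{Taubes:ECH=SWF1}*{Appendix}: near every Reeb orbit of action less than $L$ the pair $(\lambda,J)$ is put into an explicit flat model in which the linearized Seiberg--Witten operator decouples into the $\CC$-vortex operator in the normal directions and a model operator along the orbit governed by its (hyperbolic or positive-elliptic) return map. Given an admissible orbit set $\Theta$ with $\aA(\Theta)<L$, one builds an approximate solution by transporting the multiplicity-$m_i$ vortex solution along each $\Theta_i$ and gluing to the reducible away from the orbits, and then runs a contraction-mapping/Newton iteration to produce an honest solution $\fc_\Theta$ for $r$ large; the $r$-uniform invertibility of the linearization needed here is exactly what $L$-flatness provides, since on the flat model it reduces to the vortex operator whose kernel and cokernel are known. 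Uniqueness follows by showing, via the estimates of the previous paragraph, that any solution with $E<2\pi L$ lies in the small neighborhood of the model associated to some such $\Theta$, hence equals $\fc_\Theta$; combined with the map $\fc\mapsto\Theta_\fc$ this exhibits the two assignments as mutually inverse. Choosing $L$ so that no admissible orbit set has action exactly $L$, the estimate $E(\fc_\Theta)=2\pi\,\aA(\Theta)+O(r^{-1})$ makes $\{E<2\pi L\}$ correspond to $\{\aA<L\}$ for $r$ large, giving the filtered statement; canonicity (independence of auxiliary choices up to the stated data) is inherited from the canonicity of the flat models and the vortex solutions.

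The main obstacle is the coupled surjectivity-and-uniqueness step. Producing $\fc_\Theta$ from $\Theta$ is a gluing problem that is routine once the flat model is in place, but showing that \emph{no other} solutions with $E<2\pi L$ exist requires the full strength of the a priori estimates to trap every large-$r$ solution in the neighborhood of some model, and requires the $r$-uniform control of the linearized operator there. This is where essentially all of Taubes' hard analysis is concentrated; by contrast, the $c_1$ and homology-class bookkeeping, the admissibility check, and the action/energy dictionary are comparatively formal.
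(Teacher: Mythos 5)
The paper does not prove this statement; it is imported verbatim from Taubes (\cite{Taubes:ECH=SWF1}*{Theorem 4.2}), and your sketch accurately reproduces the architecture of Taubes's actual argument: the Weitzenb\"ock/a priori estimates forcing $\alpha^{-1}(0)$ to concentrate on an orbit set with $E(\fc)=2\pi\aA(\Theta)+O(r^{-1})$, the vortex-gluing construction of $\fc_\Theta$ in the $L$-flat normal form, and the trapping argument for uniqueness. The only place your stated mechanism diverges from the source is the admissibility step, where Taubes excludes multiply covered hyperbolic orbits via the structure of the relevant operators on the vortex moduli spaces rather than a pointwise local analysis, but as a blind outline of a proof the paper itself delegates entirely to the literature, this is essentially the same approach.
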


The image of an admissible orbit set $\Theta$ under this bijection will be denoted by $\fc_\Theta$, and is an irreducible SW monopole that solves Taubes' perturbed Seiberg--Witten equations~\eqref{SW3}.

\subsection{Closed 4-manifolds}
\label{Closed 4-manifolds}

Consider the general scenario of a closed connected oriented Riemannian 4-manifold $(M,g)$. We can recover Seiberg--Witten theory from Section~\ref{Symplectic cobordisms} by taking $(Y_\pm,\lambda_\pm)=(\varnothing,0)$, ignoring the appearance of $\omega$, and thus ignoring the canonical decomposition of $\SS_+$. Then the set of spin-c structures is only an $H^2(M;\ZZ)$-torsor. A configuration $[\bA,\Psi]$ solves \textit{the (perturbed) Seiberg--Witten equations} when
$$D_\bA\Psi=0,\;\;F^+_\bA=\frac12\rho(\Psi)+i\mu$$
where $\mu\in\Omega^2_+(M)$ is a self-dual 2-form. The moduli space of such solutions is denoted by $\fM_\mu(M,\fs)$, and its (virtual) dimension is given by
$$d(\fs):=\frac14\left(c_1(\fs)^2-2\chi(M)-3\sigma(M)\right)$$
where $c_1(\fs)$ denotes the first Chern class of the spin-c structure's positive spinor bundle, $\chi(M)$ denotes the Euler characteristic of $M$, and $\sigma(M)$ denotes the signature of $M$.

\section{Relative Seiberg--Witten}
\label{Relative Seiberg--Witten}

We now continue along with the thread of Section~\ref{Gromov invariant for homotopy 4-spheres} by studying Seiberg--Witten theory on $X^\ast$. Here, we note that the relative class $A_\1$ corresponds to a unique spin-c structure $\fs_0$ on $X^\ast-\nN$ which agrees with the unique spin-c structure on the positive end $[0,\infty)\times S^3$, and it extends over $\nN$ as the unique spin-c structure $\fs_0$ on $X^\ast$.

In the following two subsections~\ref{Cylindrical} and~\ref{AFAK} we choose different metrics and perturbations on the positive end of $X^\ast$ and various extensions of them to the rest of $X^\ast$. The reason we do so is two-fold: Using the setup in Section~\ref{AFAK} we can most easily calculate the dimensions of the relevant Seiberg--Witten moduli spaces, and then we can also calculate the relevant Seiberg--Witten invariants. In the final subsection~\ref{Mixed ends} we relate the two setups (in~\ref{Cylindrical} and~\ref{AFAK}), which will ultimately be used in Section~\ref{Final result and outlook} to fully understand our Gromov invariant from Theorem~\ref{thm:class}.

We always fix the positive scalar curvature metric $g_o$ on $S^3$ which, as in Section~\ref{Contact 3-manifolds}, satisfies $|\lambda_{\op{std}}|=1$ and $\ast\lambda_{\op{std}}=\frac12d\lambda_{\op{std}}$ ($g_o$ is a small perturbation of the round metric).

\subsection{Cylindrical}
\label{Cylindrical}

We may formally apply Section~\ref{Symplectic cobordisms} to the near-symplectic cobordism $(X^o,\omega):(S^3,\lambda_{\op{std}})\to(\varnothing,0)$ by ignoring the canonical decomposition of $\SS_+$ on $\nN$ and extending the 2-form $\widehat\omega|_{X^0-\nN}$ over $\nN$ using a cutoff function that vanishes near $\omega^{-1}(0)$; we abuse notation and denote the resulting 2-form by $\widehat\omega$ still. Then a configuration $[\bA,\Psi]$ on $\overline{X^o}=X^\ast$ with respect to $\fs_0$, which is asymptotic to $\fc\in\fM(S^3,\fs_0)$, solves \textit{the $\widehat\omega$-perturbed Seiberg--Witten equations} when
\begin{equation}
\label{SWcyl}
D_\bA\Psi=0,\;\;F^+_\bA=r(\rho(\Psi)-i\widehat\omega)+2i\mu_\ast
\end{equation}
for a fixed positive real number $r\in\RR$. The moduli space of such solutions is denoted by $\fM^{\op{cyl}}_{\hat\omega}(X^\ast,\fc;\fs_0)$. As a reminder, $X^\ast$ is equipped here with a metric $g$ which is the cylindrical metric $ds^2+g_o$ on $[0,\infty)\times S^3$, and we fixed a suitably generic exact 2-form $\mu\in\Omega^2(X^\ast)$ that agrees on its end with an exact 2-form $\mu\in\Omega^2(S^3)$ from Section~\ref{Contact 3-manifolds}.

\subsection{AFAK}
\label{AFAK}

We now alter the setup from Section~\ref{Cylindrical}. On the positive end $[1,\infty)\times S^3$ of $X^\ast$ we instead equip the conical metric $e^{2s}(ds^2+g_o)$ and symplectic form $\omega_0=\frac12d(e^{2s}\lambda_{\op{std}})$. In the language of \cite{KM:contact}, this end is ``asymptotically flat almost K\"ahler'' (AFAK). We then extend $\omega_0$ to the 2-form $\widehat\omega$ from Section~\ref{Cylindrical} on $X^o$ and denote this extended 2-form on $X^\ast$ by $\overline\omega$. Similarly, we extend the conical metric so that it is cylindrical on $[0,\e]\times S^3$ and agrees with the metric $g$ from Section~\ref{Cylindrical} on $X^o$. Concretely, on the end $[0,\infty)\times S^3$ we have $g=e^{2T}(ds^2+g_o)$ and $\overline\omega=e^{2T}(ds\wedge\lambda_{\op{std}}+\frac12d\lambda_{\op{std}})$, where $T:[0,\infty)\to[0,\infty)$ is a smooth non-decreasing function that has value 0 on $[0,\e]$ and is the identity map on $[2\e,\infty)$. Finally, we fix a suitably generic exact 2-form $\mu\in\Omega^2(X^\ast)$ that has exponential decay on its end and agrees on $[0,\e]\times S^3$ with an exact 2-form $\mu\in\Omega^2(S^3)$ from Section~\ref{Contact 3-manifolds}.

Then a configuration $[\bA,\Psi]$ on $X^\ast$ with respect to $\fs_0$, which is asymptotic to the canonical configuration $[A_\xi,(1,0)]$ on $S^3$ (specified in Section~\ref{Contact 3-manifolds}), solves \textit{the $\overline\omega$-perturbed Seiberg--Witten equations} when
\begin{equation}
\label{SWafak}
D_\bA\Psi=0,\;\;F^+_\bA=r(\rho(\Psi)-i\overline\omega)+2i\mu_\ast
\end{equation}
for a fixed positive real number $r\in\RR$. The moduli space of such solutions is denoted by $\fM^{\op{afak}}_{\overline\omega}(X^\ast,\fs_0)$. This moduli space is of the sort which is used in \cite{KM:contact} to define a ``relative'' Seiberg--Witten invariant of $(X^o,\xi_{\op{std}})$. We take a brief moment to elaborate.

Consider more generally a connected oriented 4-manifold $M$ with nonempty positive boundary, equipped with an oriented contact structure $\xi\to \partial M$ that is compatible with the boundary orientation of $M$. Then the \textit{relative Seiberg--Witten invariant}
$$SW_{M,\xi}(\fs)\in\ZZ$$
is defined for each spin-c structure $\fs\in\Spinc(M)$ that extends the canonical spin-c structure $\fs_\xi$ on $\partial M$, given a choice of ``homology orientation'' of $(M,\xi)$. This was spelled out in \cite{KM:contact} (see also \cite{KMOS:monopoles}*{\S6}), and a discussion about the relation between \cite{KM:contact} and \cite{KM:book} was given in \cite{Taubes:Weinstein2}*{\S4}. Suffice to say that
\begin{equation}
\label{eqn:SWcount}
SW_{X^o,\xi_{\op{std}}}(\fs_0):=\#\fM^{\op{afak}}_{\overline\omega}(X^\ast,\fs_0)
\end{equation}
using the fact that

\begin{lemma}
$\dim \fM^{\op{afak}}_{\overline\omega}(X^\ast,\fs_0)=0$.
\end{lemma}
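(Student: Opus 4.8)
The plan is to compute the expected dimension of $\fM^{\op{afak}}_{\overline\omega}(X^\ast,\fs_0)$ via the standard index formula for the Seiberg--Witten operator on a 4-manifold with an AFAK end, as set up in \cite{KM:contact}. On such a manifold the linearized operator at a solution is Fredholm, and its index is given by a topological term plus a boundary (eta-invariant) correction coming from $(S^3,\xi_{\op{std}})$. First I would invoke the formula from \cite{KM:contact} expressing $\dim\fM^{\op{afak}}_{\overline\omega}(X^\ast,\fs_0)$ as
$$\tfrac14\big(c_1(\fs_0)^2 - 2\chi(X^o) - 3\sigma(X^o)\big) + (\text{boundary correction term for }(S^3,\xi_{\op{std}})),$$
with the relative self-intersection $c_1(\fs_0)^2$ computed using the canonical trivialization of $\det\SS_+$ near the AFAK end determined by $\fs_\xi$. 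Because $X$ is a homotopy 4-sphere, $X^o$ is a homology 4-ball: $\chi(X^o)=1$, $\sigma(X^o)=0$, $H^2(X^o;\ZZ)=0$, so $\fs_0$ is the unique spin-c structure and $c_1(\fs_0)^2 = 0$ relative to the canonical framing (the absence of a non-torsion relative $H^2$ means the relative self-intersection number is forced). This gives a contribution of $-\tfrac12$ from the topological term.

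Next I would pin down the boundary correction term. The cleanest route is to note that this term depends only on the contact 3-manifold $(S^3,\xi_{\op{std}})$ with its chosen metric $g_o$ and contact form $\lambda_{\op{std}}$, not on the filling, so it can be evaluated on the model filling: the standard ball $D^4\subset\RR^4$ with its flat AFAK completion, i.e.\ standard symplectic $(\RR^4,\omega_{\op{std}})$, whose AFAK moduli space for $\fs_\omega$ consists of exactly the one reducible/Kähler solution (the vortex-free solution), which is cut out transversely and rigid, so its dimension is $0$. Since the topological term for $D^4$ is also $-\tfrac12$ (same $\chi$, $\sigma$, $c_1^2$), the boundary correction must equal $+\tfrac12$. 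Plugging this back, the topological term $-\tfrac12$ and the boundary term $+\tfrac12$ cancel, yielding $\dim\fM^{\op{afak}}_{\overline\omega}(X^\ast,\fs_0)=0$ for our $X^\ast$ as well, since its topological and boundary contributions are identical to those of the model. Transversality for generic $\mu$ (from the suppressed abstract perturbations) then ensures the moduli space is a genuine $0$-manifold.

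The main obstacle I expect is bookkeeping the boundary term correctly: making sure the eta-invariant/spectral-flow correction in the index formula of \cite{KM:contact} is being evaluated with the same metric, contact form, and spin-c framing conventions that we fixed in Sections~\ref{Contact 3-manifolds} and~\ref{AFAK}, and in particular that the perturbation term $r(\rho(\Psi)-i\overline\omega)$ does not shift the relevant spectral data. The ``evaluate on the standard model'' trick sidesteps an explicit computation of this correction, but it requires knowing that the AFAK index formula is genuinely additive under gluing along $(S^3,\xi_{\op{std}})$ (excising a homology ball and regluing the standard ball changes neither the topological term, by the homology-ball property, nor the boundary term, which is intrinsic to the contact end) — this additivity is implicit in the cobordism formalism of \cite{KM:book} and \cite{KM:contact} but should be stated carefully. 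Once that is in place, the dimension count is immediate.
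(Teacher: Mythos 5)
Your computation is correct, but it takes a different route from the paper. The paper's proof is a one\nobreakdash-line citation to \cite{KM:contact}: there the expected dimension of the AFAK moduli space is identified (Theorem 2.4 of that paper) with the relative Euler number of the positive spinor bundle $\SS_+$ relative to the canonical section $(1,0)$ defined near the contact end, and this vanishes whenever the spin-c structure is induced by a globally defined almost complex structure compatible with $\xi_{\op{std}}$ on the boundary --- which is the case for $X^o$ since it is a homotopy 4-ball. That formulation packages your ``topological term plus boundary correction'' into a single relative characteristic number and avoids any eta-invariant bookkeeping. Your argument instead computes the index by excision against the model $(\RR^4,\omega_{\op{std}})$: the boundary contribution is intrinsic to $(S^3,\lambda_{\op{std}},g_o,\fs_{\xi_{\op{std}}})$, the interior contributions agree because $X^o$ is a homology ball with $H^2(X^o;\ZZ)=0$, and the model moduli space is a transverse point. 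This is sound, and the comparison-with-the-model step is essentially the same excision principle that underlies the relative Euler class formula, but as you note it leans on additivity of the index under gluing along the contact end, which you would have to extract from \cite{KM:book} and \cite{KM:contact} rather than quote directly; the paper's route gets the vanishing for free from the existence of a global $\xi_{\op{std}}$-compatible almost complex structure on $X^o$, with transversality and compactness supplied by the other cited results of \cite{KM:contact}.
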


\begin{proof}
This is the result of \cite{KM:contact}*{Corollary 3.12, Theorem 3.3, Theorem 2.4} because $X^o$ admits a global $\xi_{\op{std}}$-compatible almost complex structure.
\end{proof}

Strictly speaking, since $X^o$ admits a global $\xi_{\op{std}}$-compatible almost complex structure it has a canonical homology orientation by \cite{KM:contact}*{Appendix}, and so the count~\eqref{eqn:SWcount} is taken with signs. We can now state the main theorem of this section.

\begin{thm}
\label{thm:SW}
Given $(X,\omega,\fs_0)$ as above with the canonical homology orientation of $(X^o,\xi_{\op{std}})$, the signed count of points in $\fM^{\op{afak}}_{\overline\omega}(X^\ast,\fs_0)$ is one. In other words,
$$SW_{X^o,\xi_{\op{std}}}(\fs_0)=SW_{\RR^4,\xi_{\op{std}}}(\fs_0)=1$$
and so the relative Seiberg--Witten invariant cannot distinguish homotopy 4-spheres.
\end{thm}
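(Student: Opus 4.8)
The plan is to compute the count $\#\fM^{\op{afak}}_{\overline\omega}(X^\ast,\fs_0)$ directly by exploiting Taubes' large-$r$ correspondence between Seiberg--Witten solutions and pseudoholomorphic curves, combined with the topological rigidity that forces the relevant curve to be the empty curve (equivalently, the reducible monopole). Since $H_2(X^\ast;\ZZ)=0$ and the relative class $A_\1$ is the unique one bounding $\1$, the moduli space in the AFAK setting has expected dimension zero by the preceding Lemma, so $SW_{X^o,\xi_{\op{std}}}(\fs_0)$ is a well-defined signed count, and I want to show this signed count equals $1$.

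First I would invoke the deformation-invariance of the relative Seiberg--Witten invariant (as established in \cite{KM:contact}, \cite{KMOS:monopoles}) to reduce to a model computation: the invariant depends only on $(X^o,\xi_{\op{std}})$ through its homotopy class of data, and in fact through the concordance class of the contact boundary filling. Because $X^o$ admits a global $\xi_{\op{std}}$-compatible almost complex structure and is simply connected with trivial reduced homology, all the relevant moduli-theoretic data (the metric, the perturbing $2$-form, the parameter $r$) can be deformed without crossing reducibles — the obstruction to reducibles appearing is measured by the period of $\overline\omega$ against $H^2$, which vanishes. So the count is unchanged along any path of such data.

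Next, the heart of the argument: for $r$ sufficiently large, Taubes' results (the four-dimensional analogue of Theorem~\ref{thm:generators}, as used in \cite{KM:contact} and \cite{Taubes:Weinstein2}) identify the solutions in $\fM^{\op{afak}}_{\overline\omega}(X^\ast,\fs_0)$ with pseudoholomorphic curves in $X^\ast$ (with AFAK end) representing the relevant relative homology class and asymptotic to the canonical configuration on $S^3$ — but the canonical spin-c structure $\fs_\omega$ is precisely $\fs_0$ (the one for which $E$ is trivial), so the curve must represent $0\in H_2(X^\ast;\ZZ)\cong 0$. Hence the only such curve is the empty curve, corresponding to the unique solution $[A_{K^{-1}},(1,0)]$ (the ``vortex-free'' monopole), which is cut out transversally and carries sign $+1$ relative to the canonical homology orientation of \cite{KM:contact}*{Appendix}. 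Therefore $\#\fM^{\op{afak}}_{\overline\omega}(X^\ast,\fs_0)=1$. The same reasoning applied to the standard $\RR^4$ (or equivalently $X^o$ replaced by the standard $4$-ball $D^4$) gives $SW_{\RR^4,\xi_{\op{std}}}(\fs_0)=1$, and since neither computation used anything about the smooth structure on $X$ beyond the asymptotically standard near-symplectic data and the vanishing of $H_2$, the two agree and the invariant cannot distinguish homotopy $4$-spheres.

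The main obstacle I expect is the rigorous passage through Taubes' large-$r$ analysis in this noncompact AFAK setting: one must confirm that for $r\gg 0$ there are no ``extra'' solutions — in particular no irreducible monopoles whose associated pseudoholomorphic data is nonempty — and that the unique reducible-like solution is nondegenerate with the claimed sign. This requires the a priori energy bounds (which in our weakly-exact cobordism are controlled by $\rho(A_\1)$, cf.\ the setup of Section~\ref{Gromov invariant for homotopy 4-spheres}) together with the compactness and gluing package of \cite{KM:contact}; checking that the sign is $+1$ rather than $-1$ is a careful but routine orientation bookkeeping relative to the canonical homology orientation, and I would handle it by comparing directly against the model $D^4$ computation where the answer is classically known to be $1$.
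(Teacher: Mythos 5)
Your overall strategy --- invoke the large-$r$ Seiberg--Witten-to-pseudoholomorphic-curve correspondence on $X^\ast$ and argue that triviality of $H_2(X^\ast;\ZZ)$ forces the only curve to be the empty one --- has a genuine gap, and in fact its central geometric claim is false. The form $\overline\omega$ is not symplectic on $X^\ast$: it is the near-symplectic form cut off to vanish on a neighborhood of $\omega^{-1}(0)$, so Taubes' localization does not produce \emph{closed} curves in $X^\ast$. Where the correspondence does apply (on the symplectic part $X^o-\nN$, after stretching along $\partial\nN$ as in Proposition~\ref{prop:multivalued} and Theorem~\ref{thm:Gr}), the curves it produces are punctured curves asymptotic to Reeb orbits on $\partial\nN$ representing the \emph{relative} class $A_\1$, whose boundary is $\1\neq 0$; the vanishing of the absolute group $H_2(X^\ast;\ZZ)$ does not force this relative class to be zero. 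Indeed, Main Result 2 of the paper asserts that a \emph{nonempty} curve in class $A_\1$ always exists precisely because the count is $1$ --- so ``the only such curve is the empty curve'' cannot be the mechanism. Relatedly, your appeal to ``deformation invariance ... through the concordance class of the contact boundary filling'' assumes exactly what must be proved: the relative invariant a priori depends on the smooth 4-manifold $X^o$, and comparing it to the model $D^4$ computation is the entire content of the theorem, not a routine reduction.

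The paper's actual route is different and avoids the large-$r$ analysis on $X^\ast$ entirely: it first turns the perturbing form off on all of $X^o$ (a compactly supported deformation $\eta$, which does not change the signed count), then pinches the neck along $\partial X^o=S^3$ to obtain a gluing formula
$\fM^{\op{afak}}_{\overline\omega+\eta}(X^\ast,\fs_0)=\fM_{\mu_\ast^\infty}(X,\fs_0)\times\fM^{\op{afak}}_{\overline\omega'}(\RR^4,\fs_0)$.
The closed factor is handled not by curve counting but by the index formula $d(\fs_0)=\tfrac14(0-2\chi(X)-3\sigma(X))=-1$, which kills any irreducible solutions after a generic small perturbation (this step is needed because one cannot assume $X$ carries positive scalar curvature), leaving only the reducible; the $\RR^4$ factor is \cite{KM:contact}*{Theorem 4.2}. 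If you want to salvage your approach, the separation of the closed homotopy sphere from the standard end via neck-pinching (or an equivalent excision argument) is the missing ingredient; without it there is no way to see that the count is insensitive to the smooth structure of $X$.
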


\begin{proof}
Add a compactly supported perturbation to the $\overline\omega$-perturbed Seiberg--Witten equations~\eqref{SWafak} on $X^\ast$, namely that $\overline\omega$ shifts to $\overline\omega+\eta$, where
  \begin{equation*}
    \eta:=
    \begin{cases}
      -\overline\omega, \indent\indent\indent\,\text{on}\ X^o\cup\big([0,\frac\e2]\times S^3\big) \\
      0, \indent\indent\indent\indent\text{on}\ [\e,\infty)\times S^3\\
      \text{interpolate, on}\ [\frac\e2,\e]\times S^3
    \end{cases}
  \end{equation*}
Then the signed count of points of $\fM^{\op{afak}}_{\overline\omega}(X^\ast,\fs_0)$ is equal to that of the moduli space $\fM^{\op{afak}}_{\overline\omega+\eta}(X^\ast,\fs_0)$. The reason we consider this newly perturbed moduli space is to demonstrate a gluing formula below.

View $X^\ast$ diffeomorphically as a connected sum\footnote{As a reminder, the connected sum operation is well-defined by Theorem~\ref{CerfPalais}.}
$$X^\ast=X^o\cup_{S^3}(X^\ast-X^o)=X\#\RR^4$$
Then take a sequence of Riemannian metrics $\lbrace g^k\rbrace_{k\in\NN}$ on $X^\ast$ which ``pinch the neck'' along $\partial X^o=S^3$ as $k\to\infty$, and take a corresponding sequence of small perturbations $\lbrace\mu_\ast^k\rbrace_{k\in\NN}$ appearing in the $(\overline\omega+\eta)$-perturbed Seiberg--Witten equations~\eqref{SWafak} which vanish on a small neighborhood of $\partial X^o$ and are independent of $k$ on a slightly larger neighborhood of $\partial X^o$. The a priori estimates for SW solutions and a removable singularities theorem \cite{Salamon:removable} imply that a sequence of SW solutions on $(X^\ast,g^k)$ has a subsequence converging away from the neck to SW solutions over $X$ and $\RR^4$, yielding the gluing formula for sufficiently large $k$,
$$\fM^{\op{afak}}_{\overline\omega+\eta}(X^\ast,\fs_0)=\fM_{\mu_\ast^\infty}(X,\fs_0)\times\fM^{\op{afak}}_{\overline\omega'}(\RR^4,\fs_0)$$
where (by construction) the 2-form $\overline\omega'$ vanishes on a small ball around the origin in $\RR^4$ and agrees with $\omega_0$ on its AFAK end. 

It is not known whether $X$ admits metrics of positive scalar curvature, so we cannot rule out the existence of \textit{irreducible} unperturbed SW solutions in $\fM_0(X,\fs_0)$. But the virtual dimension of $\fM_{\mu_\ast^\infty}(X,\fs_0)$ is $d(\fs_0)=-1$, so the irreducible unperturbed SW solutions do not persist (under the small perturbation $\mu_\ast^\infty$) and hence $\fM_{\mu_\ast^\infty}(X,\fs_0)$ consists of a single point (which corresponds to the unique \textit{reducible} unperturbed SW solution). Thus,
$$\#\fM^{\op{afak}}_{\overline\omega+\eta}(X^\ast,\fs_0)=\#\fM^{\op{afak}}_{\overline\omega'}(\RR^4,\fs_0)$$
We are free to replace $\overline\omega'$ with the standard symplectic form $\overline\omega_{\op{std}}$ on $\RR^4$ (agreeing with $\omega_0$ on the AFAK end), by adding an appropriate compactly supported perturbation to the $\overline\omega'$-perturbed Seiberg--Witten equations~\eqref{SWafak} on $\RR^4$. The conclusions of the theorem now follow from the fact \cite{KM:contact}*{Theorem 4.2} that $\fM^{\op{afak}}_{\overline\omega_{\op{std}}}(\RR^4,\fs_0)$ consists of the unique finite-energy SW solution on $\RR^4$ asymptotic to the canonical configuration $[A_\xi,(1,0)]$ on $S^3$.
\end{proof}

\begin{remark}
The ``neck pinching'' argument in the proof of Theorem~\ref{thm:SW} also proves that the usual Seiberg--Witten invariants of a closed 4-manifold $M$ with $b^2_+(M)>1$ are equal to those of $X\#M$ for any homotopy 4-sphere $X$. This folklore result was certainly known to experts; see for example \cite{Vidussi:circle}. For completeness, it is known that the Bauer--Furuta invariants are not sensitive enough either \cite{BauerFuruta2}*{Proposition 2.3}.
\end{remark}


\subsection{Mixed ends}
\label{Mixed ends}

We will now relate the previous two subsections by a neck-stretching procedure along the submanifold $\partial X^o=\lbrace0\rbrace\times S^3\subset X^\ast$. To do this we must first introduce one more version of the Seiberg--Witten equations, this time on $\RR\times S^3$. We equip $\RR\times S^3$ with the metric $g=e^{2T}(ds^2+g_o)$ and the 2-form $\overline\omega=e^{2T}(ds\wedge\lambda_{\op{std}}+\frac12d\lambda_{\op{std}})$, where $T:\RR\to[0,\infty)$ is the smooth non-decreasing function that has value 0 on $(-\infty,0]$ and agrees with the function $T$ defined in Section~\ref{AFAK} on $[0,\infty)\times S^3$. So the metric is cylindrical on $(-\infty,\e]\times S^3$ and AFAK on $[1,\infty)\times S^3$. Finally, we fix a suitably generic exact 2-form $\mu\in\Omega^2(\RR\times S^3)$ that has exponential decay on $[\e,\infty)\times S^3$ and agrees on $(-\infty,\e]\times S^3$ with an exact 2-form $\mu\in\Omega^2(S^3)$ from Section~\ref{Contact 3-manifolds}.

Then a configuration $[\bA,\Psi]$ on $\RR\times S^3$ with respect to $\fs_0$, which is asymptotic on its positive end to the canonical configuration $[A_\xi,(1,0)]$ on $S^3$ (specified in Section~\ref{Contact 3-manifolds}) and asymptotic on its negative end to $\fc\in\fM(S^3,\fs_0)$, solves \textit{the mixed-perturbed Seiberg--Witten equations} when
\begin{equation}
\label{SWmixed}
D_\bA\Psi=0,\;\;F^+_\bA=r(\rho(\Psi)-i\overline\omega)+2i\mu_\ast
\end{equation}
for a fixed positive real number $r\in\RR$. The moduli space of such solutions is denoted by $\fM^{\op{mix}}(\fc)$. These equations were written down and analyzed in \cite{Taubes:Weinstein2}*{\S4}.\footnote{Here is where we need the errata to \cite{KM:contact}, provided in \cite{Zhang:monopoleFloer}*{\S5}. Specifically, details were missing in the proof of \cite{KM:contact}*{Lemma 3.17} that yields a uniform exponential decay estimate for the SW solutions on a compact 4-manifold with a positive AFAK end attached. The details were then supplied in a remark surrounding \cite{MrowkaRollin}*{Lemma 2.2.7}, which involves a bound by the volume of the compact 4-manifold. But if the compact 4-manifold is replaced by a 4-manifold with a negative cylindrical end (and a positive AFAK end attached), the details are instead supplied by \cite{Zhang:monopoleFloer}*{Theorem 5.13}.} In particular,

\begin{prop}[\cite{Taubes:Weinstein2}*{Proposition 4.3}]
\label{prop:Weinstein}
For $r$ sufficiently large and for the monopole $\fc_\varnothing$ that corresponds to the empty orbit set of $(S^3,\lambda_{\op{std}})$ in Theorem~\ref{thm:generators}, $\fM^{\op{mix}}(\fc_\varnothing)$ has dimension zero and consists of a single point.
\end{prop}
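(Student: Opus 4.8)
The plan is to reproduce the large-$r$ Seiberg--Witten analysis of Taubes \cite{Taubes:Weinstein2}*{\S4}: I would show that for $r$ sufficiently large every solution in $\fM^{\op{mix}}(\fc_\varnothing)$ is ``vortex-free'' in the sense that the $E$-component $\alpha$ of its spinor is nowhere vanishing, that there is exactly one such solution, and that the linearization of \eqref{SWmixed} at it is an isomorphism; these three facts yield the dimension-zero, uniqueness, and transversality assertions respectively.

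\textbf{The dimension.} First I would identify $\dim\fM^{\op{mix}}(\fc)$ with the index of the linearization of \eqref{SWmixed}, an operator of anti-self-dual-plus-Dirac type on $\RR\times S^3$ whose limiting operator on the negative end is fixed by the monopole $\fc$ on $(S^3,\lambda_{\op{std}})$ and whose limiting operator on the positive AFAK end is the canonical one. By the excision/gluing additivity of Fredholm indices this index is a constant contributed by the AFAK end --- the same constant underlying the vanishing $\dim\fM^{\op{afak}}_{\overline\omega}(\RR^4,\fs_0)=0$ of \cite{KM:contact}*{Theorem 4.2} that is used in Theorem~\ref{thm:SW} --- plus the grading of $\fc$ relative to $\fc_\varnothing$. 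Since $\fc_\varnothing$ is, under the bijection of Theorem~\ref{thm:generators}, the monopole of the empty orbit set, this relative grading is $0$ for $\fc=\fc_\varnothing$, so $\dim\fM^{\op{mix}}(\fc_\varnothing)=0$.

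\textbf{Vortex-freeness and the count.} For $r$ large, the a priori estimates for Taubes' perturbed equations (as in \cite{Taubes:ECH=SWF1} and \cite{KM:contact}, with the uniform exponential-decay estimate at the cylindrical end supplied in this non-compact setting by \cite{Zhang:monopoleFloer}*{Theorem 5.13}) give $|\Psi|\le 1+O(r^{-1})$ and control the zero set of $\alpha$ as a pseudoholomorphic current in the symplectization $\RR\times S^3$. On the positive end the solution is asymptotic to the vortex-free configuration $[A_\xi,(1,0)]$, so the current has no positive ends, and on the negative end it is asymptotic to $\fc_\varnothing$, whose orbit set is empty, so the current has no negative ends; being also homologous to $0$ in $H_2(\RR\times S^3;\ZZ)=0$, the current is empty, hence $\alpha$ is nowhere vanishing with $|\alpha|\ge 1-O(r^{-1})$. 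Reduced to vortex-free configurations, I would then solve \eqref{SWmixed} by the contraction-mapping argument Taubes uses for $\RR^4$ in \cite{KM:contact}*{Theorem 4.2}: write $\alpha=\sqrt r\,(1+\mathrm{error})$ in a suitable gauge, invert the leading-order linear operator on the mixed-end manifold, and run a fixed-point scheme, producing exactly one solution; equivalently, deform $\overline\omega$ by a compactly supported perturbation toward a split model on the neck and run an openness--closedness argument for the count of vortex-free solutions, reducing it to the ``product'' of the unique $\RR^4$ Taubes solution of \cite{KM:contact}*{Theorem 4.2} with the $\fc_\varnothing$-cylinder. Finally, the linearization at the resulting solution is, to leading order in $r^{-1}$, the direct sum of the invertible linearizations at $\fc_\varnothing$ and at the $\RR^4$ Taubes solution, hence invertible for $r$ large; so the unique solution is cut out transversally and $\fM^{\op{mix}}(\fc_\varnothing)$ is a single point.

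\textbf{The main obstacle.} The delicate part is entirely the analysis on the non-compact, mixed (cylindrical plus AFAK) ends: establishing the uniform a priori estimates and, above all, the exponential-decay estimate at the cylindrical end that makes the Fredholm theory valid and rules out solutions escaping down the neck --- precisely the estimate whose proof in \cite{KM:contact} had to be corrected, with the present mixed-end version provided by \cite{Zhang:monopoleFloer}*{\S5}. Once that analytic framework is in place, passing to vortex-free solutions and then extracting uniqueness and transversality are routine Taubes-style perturbative arguments.
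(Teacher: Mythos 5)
The paper does not prove this proposition at all: it is imported verbatim as \cite{Taubes:Weinstein2}*{Proposition 4.3}, with the only paper-specific input being the footnote pointing to \cite{Zhang:monopoleFloer}*{\S5} for the exponential-decay estimate on the mixed (cylindrical plus AFAK) ends. Your sketch is therefore not competing with an argument in the paper but reconstructing the argument in the cited source, and as a reconstruction it has the right architecture: index additivity pinning the dimension to the grading of $\fc_\varnothing$ (the paper's own later invocation of \cite{KM:contact}*{\S5.4} in Theorem~\ref{thm:mix} plays exactly this role), a priori estimates forcing vortex-freeness, and a contraction-mapping/implicit-function argument around the canonical configuration giving uniqueness and transversality. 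You also correctly identify the genuinely delicate point, namely the uniform decay estimate at the cylindrical end that Zhang's errata supplies.

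One caveat worth flagging: your vortex-freeness step is phrased in the language of the $r\to\infty$ SW-to-Gr limit (``control the zero set of $\alpha$ as a pseudoholomorphic current''), whereas the proposition is a statement at a fixed sufficiently large $r$. At fixed $r$ the correct mechanism is the energy bound itself: the solution asymptotic to $\fc_\varnothing$ has energy $o(1)$ as $r$ grows, while any zero of $\alpha$ forces a definite curvature contribution, so for $r$ large no zeros can occur; one does not literally extract a limiting current. The Stokes/exactness observation you make (no closed nonconstant holomorphic curve in the exact manifold $\RR\times S^3$) is the right heuristic for why nothing obstructs this, but the quantitative version is what Taubes actually runs. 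This is a presentational rather than a substantive gap, and the rest of your outline is consistent with the cited proof.
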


The main theorem of this section is the following.

\begin{thm}
\label{thm:mix}
In the setting of Theorem~\ref{thm:SW}, $SW_{X^o,\xi_{\op{std}}}(\fs_0)$ is equal to
$$\#\fM^{\op{afak}}_{\overline\omega}(X^\ast,\fs_0)=\#\fM^{\op{cyl}}_{\hat\omega}(X^\ast,\fc_\varnothing;\fs_0)$$
for the versions of~\eqref{SWcyl} and~\eqref{SWafak} with $r$ sufficiently large.
\end{thm}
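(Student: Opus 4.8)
The plan is to interpolate between the AFAK setup of Section~\ref{AFAK} and the cylindrical setup of Section~\ref{Cylindrical} by a neck-stretching argument along the separating hypersurface $\partial X^o=\{0\}\times S^3\subset X^\ast$, degenerating the metric and $2$-form so that a long cylindrical neck $[-R,0]\times S^3$ is inserted between $X^o$ (carrying the cylindrical data of Section~\ref{Cylindrical}) and the positive end $[0,\infty)\times S^3$ (carrying the AFAK data of Section~\ref{AFAK}). In the limit $R\to\infty$ the $\overline\omega$-perturbed equations~\eqref{SWafak} on $X^\ast$ should split into the $\widehat\omega$-perturbed equations~\eqref{SWcyl} on $X^\ast$ (now with cylindrical end) asymptotic to some monopole $\fc$ on $(S^3,\lambda_{\op{std}})$, glued to the mixed-perturbed equations~\eqref{SWmixed} on $\RR\times S^3$ with negative limit $\fc$ and positive (AFAK) limit the canonical configuration $[A_\xi,(1,0)]$.

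The key steps, in order: (i) arrange the one-parameter family of metrics/forms $(g^R,\overline\omega^R,\mu^R)$ realizing the neck stretch, with $g^R$ and $\overline\omega^R$ product-cylindrical on the inserted neck and equal to the fixed data away from the neck; since all these perturbed equations have the same form with $r$ fixed and large, the count $\#\fM^{\op{afak}}_{\overline\omega^R}(X^\ast,\fs_0)$ is $R$-independent by the usual cobordism-of-moduli-spaces argument (the relevant moduli space is zero-dimensional and compact by Lemma~\ref{thm:SW}'s input, i.e. the a priori estimates and Lemma on $\dim=0$). (ii) Apply the standard Seiberg--Witten compactness/gluing package on manifolds with cylindrical ends --- a priori curvature and spinor estimates for the Taubes-perturbed equations, exponential convergence to a nondegenerate monopole on the neck cross-section $(S^3,\lambda_{\op{std}})$ (nondegeneracy holds after the generic perturbation $\mu$), and the removable-singularities/gluing theorem --- to conclude that for $R$ large every solution decomposes uniquely, so
\[
\#\fM^{\op{afak}}_{\overline\omega}(X^\ast,\fs_0)=\sum_{\fc\in\fM(S^3,\fs_0)}\#\fM^{\op{cyl}}_{\hat\omega}(X^\ast,\fc;\fs_0)\cdot\#\fM^{\op{mix}}(\fc),
\]
with index additivity guaranteeing that only broken configurations with each piece of index $0$ contribute. (iii) Evaluate the factor $\#\fM^{\op{mix}}(\fc)$: by Proposition~\ref{prop:Weinstein} it equals $1$ when $\fc=\fc_\varnothing$, and I would argue it vanishes for every other monopole $\fc$ on $(S^3,\lambda_{\op{std}})$ --- because the positive scalar curvature metric $g_o$ on $S^3$ forces the AFAK end to select the canonical (reducible-type) limit, so the energy/action considerations of \cite{Taubes:Weinstein2}*{\S4} pin the negative limit down to $\fc_\varnothing$ uniquely; alternatively, $\Hfrom^\ast(S^3,\fs_0)$ is rank one and the mixed map is the identity on it. With step (iii) the sum collapses to $\#\fM^{\op{cyl}}_{\hat\omega}(X^\ast,\fc_\varnothing;\fs_0)$, which combined with~\eqref{eqn:SWcount} and Theorem~\ref{thm:SW} gives the claimed chain of equalities.

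The main obstacle will be step (ii)--(iii): making the gluing rigorous in this noncompact, non-exact AFAK setting requires the uniform exponential decay estimates that were the subject of the errata to \cite{KM:contact}, and one must check that those estimates (supplied in the cylindrical-plus-AFAK case by \cite{Zhang:monopoleFloer}*{Theorem 5.13}, as flagged in the footnote to Proposition~\ref{prop:Weinstein}) are strong enough to run the neck-stretching compactness here with both a cylindrical neck being inserted and a positive AFAK end present. A secondary subtlety is ruling out bubbling or noncompactness of $\fM^{\op{cyl}}_{\hat\omega}(X^\ast,\fc;\fs_0)$ for the relevant $\fc$, and confirming that the orientations/signs (the canonical homology orientation of $(X^o,\xi_{\op{std}})$) multiply correctly across the gluing --- though over $\ZZ/2$ this last point is vacuous and the argument goes through as stated.
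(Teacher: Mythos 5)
Your overall architecture is exactly the paper's: stretch the neck along $\partial X^o=\lbrace 0\rbrace\times S^3$ with the large-$r$ $\lambda_{\op{std}}$-perturbation on the neck, invoke the composition law (the paper cites \cite{KM:book}*{Proposition 26.1.6} for your gluing formula in step (ii)), and collapse the sum to the $\fc_\varnothing$ term using Proposition~\ref{prop:Weinstein}. The one place you diverge is step (iii), and it is the weak point. The paper does \emph{not} show that $\fM^{\op{mix}}(\fc)$ is empty for $\fc\ne\fc_\varnothing$; it argues by additivity of gradings that the two expected dimensions satisfy $\dim\fM^{\op{cyl}}_{\hat\omega}(X^\ast,\fc;\fs_0)+\dim\fM^{\op{mix}}(\fc)=0$ for every $\fc$, so once both vanish for $\fc_\varnothing$ (which requires the separate input $\dim\fM^{\op{cyl}}_{\hat\omega}(X^\ast,\fc_\varnothing;\fs_0)=\dim\fM^{\op{afak}}_{\overline\omega}(X^\ast,\fs_0)=0$ via \cite{KM:contact}*{\S5.4}), any other $\fc$ forces one factor to have negative expected dimension and hence to be empty after transversality. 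Your substitute justifications do not work as stated: positive scalar curvature of $g_o$ is irrelevant here because the equations on the neck are Taubes' large-$r$ $\lambda_{\op{std}}$-perturbed equations~\eqref{SW3}, which admit many irreducible monopoles $\fc_\Theta$ on $S^3$ (one for each admissible orbit set of small action), so nothing about curvature ``pins down'' the negative limit; and $\Hfrom^\ast(S^3,\fs_0)$ is an infinite $U$-tower (one $\ZZ/2$ in each even nonnegative grading), not rank one, so the ``mixed map is the identity'' claim is not meaningful at the level of moduli-space counts. Replace step (iii) with the grading-additivity/negative-dimension argument and you recover the paper's proof; the analytic caveats you raise at the end (uniform exponential decay on the mixed cylinder, handled by \cite{Zhang:monopoleFloer}*{Theorem 5.13}) are correctly identified and are exactly what the paper's footnote addresses.
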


\begin{proof}
We may follow the analogous proof of the composition law in monopole Floer homology \cite{KM:book}*{\S26}. That is, we ``stretch the neck'' of $X^\ast$ (along $S^3=\partial X^o$) and use the large $r$ $\lambda_{\op{std}}$-perturbations of~\eqref{SW3} on the cylindrical neck region. Then since $\dim\fM^{\op{afak}}_{\overline\omega}(X^\ast,\fs_0)=0$ we use \cite{KM:book}*{Proposition 26.1.6} to compute
$$SW_{X^o,\xi_{\op{std}}}(\fs_0)=\sum_\fc\#\fM^{\op{cyl}}_{\hat\omega}(X^\ast,\fc;\fs_0)\cdot\#\fM^{\op{mix}}(\fc)$$
where the index is over those monopoles $\fc\in\fM(S^3,\fs_0)$ for which
\begin{equation}
\label{stretchdim}
\dim\fM^{\op{cyl}}_{\hat\omega}(X^\ast,\fc;\fs_0)=\dim\fM^{\op{mix}}(\fc)=0
\end{equation}
So it suffices to show that the only monopole satisfying~\eqref{stretchdim} is $\fc_\varnothing$, as we may then invoke Proposition~\ref{prop:Weinstein} to conclude the proof of the theorem. By additivity of gradings \cite{KM:book}*{\S24.4} and the fact that negative dimensional moduli spaces of irreducible SW solutions are empty when cut out transversally, if~\eqref{stretchdim} holds for $\fc_\varnothing$ then it holds for no others. For $\fc_\varnothing$ we invoke Proposition~\ref{prop:Weinstein} and $\dim\fM^{\op{cyl}}_{\hat\omega}(X^\ast,\fc_\varnothing;\fs_0)=\dim\fM^{\op{afak}}_{\overline\omega}(X^\ast,\fs_0)=0$ via \cite{KM:contact}*{\S5.4}.
\end{proof}

\section{Final result and outlook}
\label{Final result and outlook}

The following proposition shows that $Gr_{X,\omega}$ equivalently counts SW solutions on a completion of $X^o-\nN$ and may be viewed as an element of $\bigotimes_{k=1}^N\Hfrom^{-\ast}(S^1\times S^2,\fs_{\xi_{\op{ns}}}+1)$. In accord with \cite{Gerig:taming, Gerig:Gromov} we normalize the $\ZZ$-grading on each Floer homology factor so that
  \begin{equation*}
    ECH_j(S^1\times S^2,\xi_{\op{ns}},1)\cong\Hfrom^j(S^1\times S^2,\fs_{\xi_{\op{ns}}}+1)\cong
    \begin{cases}
      \ZZ/2, & \text{if}\ j\ge0 \\
      0, & \text{otherwise}
    \end{cases}
  \end{equation*}
The theorem proceeding the proposition then computes $Gr_{X,\omega}$.

\begin{prop}
\label{prop:multivalued}
For generic $J$, sufficiently large $r$, and an admissible orbit set $\Theta$ on $(\partial\nN,\xi_{\op{ns}})$ with action less than $\rho(A_\1)$, there is a multi-valued bijection between $\mM_0(\varnothing,\Theta;A_\1)$ and $\fM_0(\fc_\varnothing,X^o-\nN,\fc_\Theta;\fs_0)$ such that
$$\#\mM_0(\varnothing,\Theta;A_\1)=\#\fM_0(\fc_\varnothing,X^o-\nN,\fc_\Theta;\fs_0)\in\ZZ/2$$
\end{prop}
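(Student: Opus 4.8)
The plan is to establish the multi-valued bijection between $\mM_0(\varnothing,\Theta;A_\1)$ and $\fM_0(\fc_\varnothing,X^o-\nN,\fc_\Theta;\fs_0)$ by invoking Taubes' ``$ECH=SWF$'' machinery, exactly as it is used to prove the four-dimensional part of the ECH–monopole correspondence, but adapted to the weakly exact cobordism $(X^o-\nN,\omega)$. The key input is the large-$r$ deformation that turns $J$-holomorphic curves into Seiberg--Witten solutions: concretely, one studies the $\widehat\omega$-perturbed Seiberg--Witten equations~\eqref{SWcyl} on the completion $\overline{X^o-\nN}$ with respect to the spin-c structure $\fs_0$, with the symplectic cobordism end data prescribed by Lemma~\ref{lem:nbhd} and the $\rho(A_\1)$-flat pairs $(\lambda_{\op{ns}},J)$ on the $N$ copies of $S^1\times S^2$ and $(\lambda_{\op{std}},J)$ on $S^3$. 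By Theorem~\ref{thm:generators} the monopoles $\fc_\Theta$ and $\fc_\varnothing$ are precisely the ones corresponding to admissible orbit sets of action less than $\rho(A_\1)$, and the bound $\rho(A_\1)$ plays the role of the filtration level $L$ throughout, guaranteeing that the relevant curves have energy controlled by $\rho(A_\1)$ (this is precisely why $\nN$ was chosen via Lemma~\ref{lem:nbhd}).

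First I would set up the correspondence on the curve side: by the definition of $\rho(A_\1)$ and weak exactness of $(X^o-\nN,\omega)$, every element of $\mM_0(\varnothing,\Theta;A_\1)$ has $\omega$-energy equal to $\rho(A_\1)$ plus boundary action terms, hence a uniform energy bound; combined with the $\rho(A_\1)$-positivity and $\rho(A_\1)$-flatness conditions from Lemma~\ref{lem:nbhd}, the ECH index calculation rules out negative-index curves and shows that $I=0$ currents in class $A_\1$ are (unions of) embedded or multiply-covered trivial-cylinder-free curves, so $\mM_0(\varnothing,\Theta;A_\1)$ is a finite set of points cut out transversally for generic $J$. This is the content imported ``verbatim'' from \cite{Gerig:taming}, and it is where the constraints $\omega^{-1}(0)$ has no twisted circles and $\Theta_i$ are hyperbolic or $\rho(A_\1)$-positive elliptic get used. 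Next I would run Taubes' construction: for $r$ sufficiently large, a generic SW solution in $\fM_0(\fc_\varnothing,X^o-\nN,\fc_\Theta;\fs_0)$ has its zero set of the $E$-component converging (as $r\to\infty$) to a $J$-holomorphic current in class $A_\1$ with the prescribed asymptotics, giving a map $\fM_0\to\mM_0$; conversely, each such curve is the limit of a nonempty family of SW solutions, and the count of SW solutions over a given curve is the ``multiplicity'' making the correspondence multi-valued rather than bijective — but, crucially, that multiplicity is always odd, so the mod-2 counts agree. The index matching $\ind=0 \Leftrightarrow I=0$ follows from the index formulas relating the Seiberg--Witten index to the ECH index (together with the grading normalizations fixed just before the proposition).

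The main obstacle I expect is not the large-$r$ analysis per se — that is Taubes' theorem, applied on the symplectization ends and (in the form needed for cobordisms) on $X^o-\nN$ — but rather verifying that Taubes' gluing/compactness arguments, originally formulated for closed symplectizations and for exact symplectic cobordisms between contact manifolds, carry over to the present setting where $(X^o-\nN,\omega)$ is only \emph{weakly} exact and where the positive ``boundary'' $S^3$ is treated via the cylindrical end of Section~\ref{Cylindrical} rather than as part of a genuine symplectization of a contact boundary. In particular one must check that the a priori estimates survive the non-vanishing of $\rho(A_\1)$ and that the curves produced have no components escaping into the $S^3$-end (which is guaranteed because $\fc_\varnothing$ corresponds to the empty orbit set, so the only curve-end over $S^3$ is trivial), and one must confirm that the energy filtration by $\rho(A_\1)$ is respected by the deformation so that $\fc_\Theta$ with $\aA(\Theta)<\rho(A_\1)$ are the only monopoles that appear. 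I would handle this by citing the corresponding pieces of \cite{Taubes:ECH=SWF1} and \cite{HutchingsTaubes:Arnold2} (and the errata-supplemented estimates of \cite{Zhang:monopoleFloer} for the AFAK/cylindrical-end interface, as in the footnote to Section~\ref{Mixed ends}), and otherwise observing that the absence of positive ends lets us treat $(X^o-\nN,\omega)$ as a cobordism with only negative ends, exactly as in the proof of Theorem~\ref{thm:class}.
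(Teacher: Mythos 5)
Your proposal is correct and follows essentially the same route as the paper: the paper's proof simply defers to \cite{Gerig:Gromov}*{\S4} (Taubes' large-$r$ correspondence between index-zero SW solutions and ECH index-zero currents, adapted to the weakly exact cobordism), using the observation from the proof of Theorem~\ref{thm:class} that the absence of positive ends lets one treat $(X^o-\nN,\omega)$ as a cobordism with only negative boundary, together with Theorem~\ref{thm:generators} for the identification $\Theta\mapsto\fc_\Theta$. Your writeup fills in exactly the ingredients that citation encapsulates (energy bounds via $\rho(A_\1)$, transversality from Lemma~\ref{lem:nbhd}, the two directions of Taubes' correspondence, and the handling of the $S^3$ end via $\fc_\varnothing$), so there is no substantive difference in approach.
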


\begin{proof}
For the same reason as in the proof of Theorem~\ref{thm:class}, we may follow \cite{Gerig:Gromov}*{\S4} verbatim. We recall that the correspondence $\Theta\mapsto\fc_\Theta$ is given by Theorem~\ref{thm:generators}. \end{proof}

\begin{thm}
\label{thm:Gr}
Given a homotopy 4-sphere $X$ and asymptotically standard near-symplectic form $\omega$ on $X^\ast$,
$$Gr_{X,\omega}=SW_{X^o,\xi_{\op{std}}}(\fs_0)\in\ZZ/2$$
and so the near-symplectic Gromov invariant $Gr_{X,\omega}$ cannot distinguish homotopy 4-spheres.
\end{thm}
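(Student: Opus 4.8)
The plan is to chain together the three identifications that have already been established earlier in the excerpt, so that the theorem becomes essentially a bookkeeping statement.  First I would invoke Proposition~\ref{prop:multivalued}: for generic $J$ and sufficiently large $r$, and for each admissible orbit set $\Theta$ on $(\partial\nN,\xi_{\op{ns}})$ of action less than $\rho(A_\1)$, we have $\#\mM_0(\varnothing,\Theta;A_\1)=\#\fM_0(\fc_\varnothing,X^o-\nN,\fc_\Theta;\fs_0)\in\ZZ/2$, with the correspondence $\Theta\mapsto\fc_\Theta$ supplied by Theorem~\ref{thm:generators}.  Summing these equalities over all such $\Theta$ (weighting each by the generator $\Theta$ of $\bigotimes_k ECC_\ast$, respectively $\fc_\Theta$ of $\bigotimes_k\Cfrom^{-\ast}$) shows that the cycle~\eqref{eqn:chain} defining $Gr_{X,\omega}$ is carried, under Taubes' chain-level isomorphism $ECC^L_\ast\cong\Cfrom^{-\ast}_L$ of Section~\ref{Taubes' isomorphisms} (with $L=\rho(A_\1)$ and $(\lambda_{\op{ns}},J)$ taken $\rho(A_\1)$-flat as granted by Lemma~\ref{lem:nbhd}), to the corresponding cycle counting SW solutions on the completion of $X^o-\nN$.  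Hence $Gr_{X,\omega}$ is identified with the relative Seiberg--Witten class assembled from the counts $\#\fM_0(\fc_\varnothing,X^o-\nN,\fc_\Theta;\fs_0)$ in $\bigotimes_{k=1}^N\Hfrom^{-\ast}(S^1\times S^2,\fs_{\xi_{\op{ns}}}+1)$.

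Next I would collapse this Seiberg--Witten element to a number by a neck-stretching argument along $\partial\nN$, gluing back the tubular neighborhood $\nN$ of $\omega^{-1}(0)$.  Concretely, degenerating the metric along the $N$ copies of $S^1\times S^2$ and applying the composition law for SW cobordism maps (as in \cite{KM:book}*{\S26}) identifies $\sum_\Theta\#\fM_0(\fc_\varnothing,X^o-\nN,\fc_\Theta;\fs_0)\cdot(\text{SW count on }\nN\text{ capping }\fc_\Theta)$ with the SW count on $X^\ast$ for the configuration asymptotic to $\fc_\varnothing$ on $S^3$.  Because $ECH_\ast(S^1\times S^2,\xi_{\op{ns}},\Gamma)=0$ for $\Gamma\ne 1$ and the relevant Floer homologies $\Hfrom^{-\ast}(S^1\times S^2,\fs_{\xi_{\op{ns}}}+1)$ are each $\ZZ/2$ in nonnegative degrees, the pairing between the class coming from $X^o-\nN$ and the class coming from the caps $\nN$ is forced (grading reasons, exactly as in \cite{Gerig:taming, Gerig:Gromov}) to pick out a single term; this reduces the computation to $\#\fM^{\op{cyl}}_{\hat\omega}(X^\ast,\fc_\varnothing;\fs_0)$, which by Theorem~\ref{thm:mix} equals $\#\fM^{\op{afak}}_{\overline\omega}(X^\ast,\fs_0)=SW_{X^o,\xi_{\op{std}}}(\fs_0)$.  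By Theorem~\ref{thm:SW} this last quantity equals $SW_{\RR^4,\xi_{\op{std}}}(\fs_0)=1$, independently of $X$, which gives $Gr_{X,\omega}=SW_{X^o,\xi_{\op{std}}}(\fs_0)\in\ZZ/2$ and the stated conclusion.

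I would phrase the write-up so that the first paragraph (Taubes' isomorphism plus Proposition~\ref{prop:multivalued}) does the bulk of the work, since that is where the pseudoholomorphic-curve counts and the gauge-theoretic counts are literally matched, and the second paragraph (gluing $\nN$ back in and invoking Theorems~\ref{thm:mix} and~\ref{thm:SW}) is largely an appeal to results already proved.  The main obstacle I expect is the grading/concentration bookkeeping in the gluing step: one has to be careful that the absolute gradings by homotopy classes of oriented $2$-plane fields are matched across all $N$ factors of $S^1\times S^2$ and across the $ECH=SWF$ isomorphism, so that the degenerated moduli spaces that contribute are exactly the zero-dimensional ones and the pairing is non-degenerate in the single grading where $Gr_{X,\omega}$ lives.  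This is handled verbatim as in \cite{Gerig:taming, Gerig:Gromov}, but it is the step where one must resist the temptation to wave hands.  (Over $\ZZ$ one would additionally need the coherent-orientation compatibility flagged in Remark~\ref{rmk:orient}, which is why the statement is phrased over $\ZZ/2$.)
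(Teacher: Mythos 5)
Your proposal is correct and follows essentially the same route as the paper's proof: neck-stretching along $\partial\nN$ to relate $\fM^{\op{cyl}}_{\hat\omega}(X^\ast,\fc_\varnothing;\fs_0)$ to the SW counts on $X^o-\nN$, Proposition~\ref{prop:multivalued} (with Taubes' chain-level correspondence) to match those counts with the curve counts defining $Gr_{X,\omega}$, and Theorems~\ref{thm:mix} and~\ref{thm:SW} to conclude. The paper merely narrates the same chain of identifications in the opposite order, deferring the gluing and grading bookkeeping to \cite{Gerig:Gromov}*{\S5} exactly as you anticipate.
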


\begin{proof}
We may follow \cite{Gerig:Gromov}*{\S5} verbatim, using the moduli space $\fM^{\op{cyl}}_{\hat\omega}(X^\ast,\fc_\varnothing;\fs_0)$ in replace of the moduli spaces which contribute to the Seiberg--Witten invariants of a closed 4-manifold. Specifically, with respect to the decomposition $X^o=(X^o-\nN)\cup\nN$, we ``stretch the neck'' along $\partial\nN$ and analyze what happens to $\fM^{\op{cyl}}_{\hat\omega}(X^\ast,\fc_\varnothing;\fs_0)$. Then we invoke Proposition~\ref{prop:multivalued} to conclude that
$$Gr_{X,\omega}=\#\fM^{\op{cyl}}_{\hat\omega}(X^\ast,\fc_\varnothing;\fs_0)\in\bigotimes_{k=1}^N ECH_0(S^1\times S^2,\xi_{\op{ns}},1)\cong\bigotimes_{k=1}^N\Hfrom^0(S^1\times S^2,\fs_{\xi_{\op{ns}}}+1)\cong\ZZ/2$$
where we blur the distinction between homology class and numerical invariant by pairing the homology class with the generator of the Floer group. That is, $Gr_{X,\omega}$ lives in the absolute grading of Floer homology for which it may be identified with an integer modulo 2, and this number is equal to $SW_{X^o,\xi_{\op{std}}}(\fs_0)$ by Theorem~\ref{thm:SW} and Theorem~\ref{thm:mix}.
\end{proof}

\subsection{Related idea}
\label{Related idea} Take $N=2$ for simplicity of discussion. The invariant $Gr_{X,\omega}$ only counted pseudoholomorphic curves in $(\overline{X^o-\nN},\omega)$ which had no positive ends, and so it may be viewed as the image of the generator $[\varnothing]\in ECH_0(S^3,\xi_{\op{std}},0)\cong\ZZ/2$ under the ECH cobordism map
$$\Phi_0:ECH_0(S^3,\xi_{\op{std}},0)\to ECH_0(S^1\times S^2,\xi_{\op{ns}},1)\otimes ECH_0(S^1\times S^2,\xi_{\op{ns}},1)$$
Since $ECH_{2k}(S^3,\xi_\text{std},0)\cong\ZZ/2$ for all $k\ge0$ and vanishes otherwise, we may define similar ECH (or monopole Floer) cobordism maps by counting index 0 pseudoholomorphic curves (or SW solutions) with certain positive and negative ends,
$$\Phi_{2k}:ECH_{2k}(S^3,\xi_\text{std},0)\to \bigoplus_{i+j=2k}ECH_i(S^1\times S^2,\xi_{\op{ns}},1)\otimes ECH_j(S^1\times S^2,\xi_{\op{ns}},1)$$
Take $k=1$ for example, so that $\Phi_2(\text{generator})\in(\ZZ/2)^3$ in the only nontrivial gradings $(i,j)\in\lbrace(0,2),\;(1,1),\;(2,0)\rbrace$. We question whether the values in these gradings contain more information than the invariant $Gr_{X,\omega}=\Phi_0(\text{generator})\in\ZZ/2$ in grading $(0,0)$. Unfortunately, they do not and we will explain the following:
$$\Phi_2(\text{generator})=\left(Gr_{X,\omega},0,Gr_{X,\omega}\right)\in(\ZZ/2)^3$$

The $(0,2)$ and $(2,0)$ gradings reduce to the $(0,0)$ grading thanks to the \textit{U-maps}. The U-maps are degree $-2$ maps $U:ECH_j(S^1\times S^2,\xi_{\op{ns}},1)\to ECH_{j-2}(S^1\times S^2,\xi_{\op{ns}},1)$ and $U:ECH_j(S^3,\xi_\text{std},0)\to ECH_{j-2}(S^3,\xi_\text{std},0)$, which are isomorphisms for $j\ge2$. We can compose the U-maps on either side of the cobordism
$$\Phi_{2k-2}\circ U=(U\otimes\1)\circ\Phi_{2k}=(\1\otimes U)\circ\Phi_{2k}$$
as explained in \cite{KM:book}*{\S3.4} and \cite{Hutchings:lectures}*{\S3.8}.

The $(1,1)$ grading vanishes thanks to the \textit{loop-maps}. Each loop-map is a degree $-1$ map $\triangle_\gamma:ECH_j(S^1\times S^2,\xi_{\op{ns}},1)\to ECH_{j-1}(S^1\times S^2,\xi_{\op{ns}},1)$ defined using a generator $\gamma\in H_1(S^1\times S^2;\ZZ)$, which is an isomorphism for odd $j\ge1$ (and satisfies $\triangle_\gamma\circ\triangle_\gamma=0$). We can compose the loop-maps, as explained in \cite{KM:book}*{\S3.4} and \cite{Hutchings:lectures}*{\S3.8}, to obtain
$$(\triangle_\gamma\otimes\triangle_{\gamma'})\circ\Phi_{2k}=0$$
because the generators $(\gamma,\gamma')\in H_1(S^1\times S^2;\ZZ)\oplus H_1(S^1\times S^2;\ZZ)$ become homologous in the cobordism $X^o-\nN$.

\subsection{Symplectic field theory}
\label{Symplectic field theory}

We could not mimic the construction of $Gr_{X,\omega}$ using SFT because the contact homology of any overtwisted contact 3-manifold is trivial \cite{Yau:overtwisted, BourgeoisNiederkruger}. That is, a tentative SFT-type invariant would use moduli spaces of curves in the SFT framework and subsequently represent an element of a contact homology $CH_\ast(\bigsqcup_{k=1}^NS^1\times S^2,\xi_{\op{ns}})=0$.

\appendix
\section{Proof of Theorem~\ref{LuttingerTaubes}}
\label{appendix}

The proof is an application of $L^2$ Hodge theory on manifolds with cylindrical ends. Equip $X^\ast$ with a metric $g$ so that it may be written as the cylindrical completion of a Riemannian homotopy 4-ball $(M,g)$ with boundary $(S^3,g_\text{round})$. Denote by $\lambda$ the contact form $\frac12(x_1dy_1-y_1dx_1+x_2dx_2-y_2 dx_2)$ on $S^3$, and arbitrarily extend the self-dual 2-form $d(e^s\lambda)$ on $[0,\infty)\times S^3$ to an exact 2-form $\eta$ on $X^\ast$.

Define $\eta^a:=\eta+da$ for Sobolev 1-forms $a$ on $X^\ast$. We first aim to solve $d^\ast\eta^a=0$ (note that $d\eta^a=0$ automatically), or equivalently $d^\ast da=-d^\ast\eta$. It suffices to solve
$$(d^\ast d+dd^\ast)a=-d^\ast\eta$$
because co-exact forms are orthogonal to exact forms (so then $dd^\ast a=0$). This Laplace equation has a smooth $L^2_2$ solution $a_\eta$ on $X^\ast$ by $L^2$ Hodge theory \cite{Lockhart:FredholmHodge}, noting that $d^\ast\eta$ is smooth and $d^\ast\eta|_{[0,\infty)\times S^3}=d\eta|_{[0,\infty)\times S^3}=0$.

Thus $\omega:=(\eta^{a_\eta})_+$ is a self-dual 2-form on $(X^\ast,g)$ with $\omega|_{\lbrace s\rbrace\times S^3}=d(e^s\lambda)+(da)_+|_{\lbrace s\rbrace\times S^3}$ and it is closed (hence harmonic) because
$$0=d\eta^{a_\eta}=d\omega+d(\eta^{a_\eta})_-$$
$$0=d^\ast\eta^{a_\eta}=d\omega-d(\eta^{a_\eta})_-$$
Note that $(da)_+\in H^2([0,\infty)\times S^3;\RR)\cong 0$ because $0=d\omega=dd(e^s\lambda)$ there, so we may write $(da)_+=d\alpha$ on $[0,\infty)\times S^3$ for some $\alpha\in\Omega^1(S^3;\RR)$. Let's clarify nondegeneracy of $\omega$. Since $a$ satisfies an elliptic equation with an $L^2$ bound, the $L^2$ norm of $a$ decreases to zero on any sequence of balls going off to infinity. By elliptic bootstrapping we get small $C^k$ bounds on $a$, hence small pointwise bounds for $\alpha$ and $d\alpha$ on $[s_0,\infty)\times S^3$ for $s_0\in\RR$ sufficiently large. This guarantees $\omega\wedge\omega>0$ on $[s_0,\infty)\times S^3$.

Now by the work of Honda \cite{Honda:transversality}*{\S2.7} adapted to this relative setting, we find a near-symplectic self-dual harmonic form $\omega'$ on $X^\ast$ with respect to some metric $g'$, such that $(\omega',g')$ agrees with $(\omega,g)$ on $[s_0,\infty)\times S^3$. We then build the 2-form
$$\omega'':=\omega'-d(\rho(s)\alpha)$$
with $\rho(s)=0$ for $s\le s_0$ and $\rho(s)=1$ for $s\gg s_0$ such that $\frac{d\rho}{ds}$ is sufficiently small. Then $\omega''$ is a near-symplectic form on $X^\ast$ (though no longer self-dual with respect to $g'$) which agrees with the standard symplectic form on $\RR^4$ outside some compact set.

\begin{bibdiv}
\begin{biblist}
\bibselect{LagrTorus}
\end{biblist}
\end{bibdiv}

\end{document}